\newlength\knuthian@fdfive
\def\mathpal@save#1{\let\was@math@style=#1\relax}
\def\utilde#1{\mathpalette\mathpal@save
              {\setbox124=\hbox{$\was@math@style#1$}%
\setbox125=\hbox{$\fam=3\global\knuthian@fdfive=\fontdimen5\font$}
\setbox125=\hbox{$\widetilde{\vrule height 0pt depth 0pt width \wd124}$}%
               \baselineskip=1pt\relax
               \lineskiplimit=\z@\relax
               \lineskip=1pt\relax
               \vtop{\copy124\copy125\vskip -\knuthian@fdfive}}}
\declaretheorem[numberwithin=section]{theorem}
\newtheorem{lemma}[theorem]{Lemma}
\newtheorem{corollary}[theorem]{Corollary}
\newtheorem{proposition}[theorem]{Proposition}
\newtheorem{conjecture}[theorem]{Conjecture}
\newtheorem*{claim}{Claim}
\newtheorem*{theorem*}{Theorem}
\newtheorem{hoped}[theorem]{Hoped For Result}
\theoremstyle{definition}
\newtheorem{definition}[theorem]{Definition}
\theoremstyle{remark}
\newtheorem{remark}[theorem]{Remark}
\title{Evitable iterates of the consistency operator}
\author{James Walsh}
\address{Sage School of Philosophy, Cornell University}
\email{jameswalsh@cornell.edu}
\thanks{Thanks to the referee for extensive and helpful suggestions and corrections.}
\begin{document}

\maketitle

\begin{abstract}
    Why are natural theories pre-well-ordered by consistency strength? In previous work, an approach to this question was proposed. This approach was inspired by Martin's Conjecture, one of the most prominent conjectures in recursion theory. Fixing a reasonable subsystem $T$ of arithmetic, the goal was to classify the recursive functions that are monotone with respect to the Lindenbaum algebra of $T$. According to an optimistic conjecture, roughly, every such function must be equivalent to an iterate $\mathsf{Con}_T^\alpha$ of the consistency operator ``in the limit'' within the ultrafilter of sentences that are true in the standard model.
    
    In previous work the author established the first case of this optimistic conjecture; roughly, every recursive monotone function is either as weak as the identity operator in the limit or as strong as $\mathsf{Con}_T$ in the limit. Yet in this note we prove that this optimistic conjecture fails already at the next step; there are recursive monotone functions that are neither as weak as $\mathsf{Con}_T$ in the limit nor as strong as $\mathsf{Con}_T^2$ in the limit. In fact, for every $\alpha$, we produce a function that is cofinally equivalent to $\mathsf{Con}^\alpha_T$ yet cofinally equivalent to $\mathsf{Con}_T$.
\end{abstract}

\section{Introduction}

Why are natural axiomatic theories pre-well-ordered by consistency strength? It is not clear how to answer this question, nor even how to ask it mathematically, since there is no clear mathematical definition of the ``natural'' theories. Yet the informal question remains a well-known conceptual open problem.

In \cite{montalban2019inevitability, walsh2020note} an approach to this problem was proposed. The approach in question was inspired by Martin's approach to an analogous question in recursion theory: Why are the natural Turing degrees well-ordered by Turing reducibility? To state Martin's Conjecture, let's introduce some important notions. First, a function $f:\mathbb{R}\to\mathbb{R}$ is \emph{degree-invariant} if for all reals $x,y$:
$$x\equiv_T y \Longrightarrow f(x) \equiv_T f(y).$$ It seems that by relativizing the definition of a natural Turing degree one always produces a degree-invariant function on the reals. Second, a \emph{Turing cone} is any set of the form $\{x \mid x \geq_T y \}$. Assuming $\mathsf{AD}$, Martin proved that every degree-invariant set of reals (i.e., every set closed under Turing-equivalence) either contains a Turing cone or is disjoint from a Turing cone. This yields a $\{0,1\}$-valued measure on degree-invariant sets; a degree-invariant set has measure 0 if it is disjoint from a Turing cone and measure 1 if it contains a cone. When we say \emph{almost everywhere}, we mean almost everywhere with respect to this measure. Finally, we say that $f\leq_m g$ if $f(x)\leq_Tg(x)$ for almost all $x$. Martin's Conjecture is stated as follows:

\begin{conjecture}[Martin]
Assume $\mathsf{ZF+AD+DC}$.
\begin{enumerate}
    \item If $f:2^\omega \to 2^\omega$ is degree-invariant and it's not the case that $f$ is increasing almost everywhere, then f is constant almost everywhere.
    \item $\leq_m$ pre-well-orders the set of degree-invariant functions that are increasing almost everywhere. If $f$ has $\leq_m$-rank $\alpha$, then $f'$ has $\leq_m$-rank $\alpha+1$, where $f'(x)=f(x)'$ for all $x$.
\end{enumerate}
\end{conjecture}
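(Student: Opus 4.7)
The plan is to attack the two clauses of the conjecture separately, while noting upfront that this is a long-standing open problem of recursion theory for which only partial results are currently available; the sketch below is therefore aspirational and meant to identify the main obstacles rather than to deliver a complete argument.

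For part (1), the strategy would be to extend Martin's cone theorem from degree-invariant \emph{sets} to degree-invariant \emph{functions}. Suppose $f$ is degree-invariant and fails to be increasing on any cone. One first uses a standard $\mathsf{AD}$-game to locate a cone on which $f(x) \leq_T x$: Player I plays a real $x$, and Player II is required to exhibit a Turing reduction witnessing $f(x) \leq_T x$; if Player II has no winning strategy on any cone, then Player I, given an arbitrary real $z$, can play $z$'s cone to produce a contradictory $x$. Once $f$ is bounded in this way on a cone, one would run a second cone game to show that $f$ is in fact constant in degree on some further cone: Player II, given access to $x$, must produce a fixed real $c$ such that $f(x) \equiv_T c$. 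The main obstacle here is the standard one: converting almost-everywhere existence statements into uniform procedures that can serve as Player II's strategies.

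For part (2), linearity of $\leq_m$ on the increasing-a.e.\ functions would again proceed by a cone game: given $f,g$ both increasing almost everywhere, Player II attempts to produce a reduction $f(x) \leq_T g(x)$, and a suitable dichotomy should force either $f \leq_m g$ or $g \leq_m f$. The jump-successor claim splits in two: the easy inequality $f <_m f'$ follows from Post's theorem together with degree-invariance, while to show that $f <_m g$ implies $f' \leq_m g$ one would play a game in which Player II, given $g(x)$, attempts to compute the jump of $f(x)$, using the fact that $g$ strictly dominates $f$ on a cone in order to diagonalize in a Kumabe--Slaman style.

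The central obstacle in both parts is the removal of a \emph{uniformity} hypothesis: for uniformly degree-invariant functions, substantial pieces of the conjecture are known, but the passage to arbitrary degree-invariant functions under $\mathsf{AD}$ alone is where essentially all of the difficulty lies, and places the problem outside the reach of the straightforward $\mathsf{AD}$-game arguments sketched above.
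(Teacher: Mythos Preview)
The statement you have been asked to prove is Martin's \emph{Conjecture}; it is stated in the paper purely as background and motivation, and the paper neither proves it nor claims to. There is therefore no proof in the paper against which your proposal can be compared. Your own write-up already acknowledges that the problem is open and that the sketch is ``aspirational,'' which is the correct assessment: the cone-game outlines you give are the standard heuristics, but as you note, the passage from uniformly degree-invariant functions to arbitrary degree-invariant functions is exactly where all known approaches stall, and nothing in the sketch closes that gap.

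In short: no proof is expected here, and none should be supplied. The appropriate content for this environment is simply the statement of the conjecture.
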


\emph{Roughly speaking}, Martin's Conjecture says that the only definable degree-invariant functions, up to $\equiv_m$, are the constant functions, the identity function, and the iterates of the Turing jump.\footnote{Of course, the notion of ``definable'' is left vague in what I have written. More precisely, note that Martin's Conjecture is stated under the hypothesis $\mathsf{AD}$. So Martin's Conjecture will apply to Borel functions and to ever more capacious notions of ``definable function'' depending on the large cardinal axioms one assumes.}

The optimistic hope in \cite{montalban2019inevitability, walsh2020note}
 was that an analogue of Martin's Conjecture holds for axiomatic theories.  Let's fix a sound (i.e., true in the standard model) recursively extension $T$ of elementary arithmetic. Say that a recursive function $\mathfrak{g}$ is \emph{extensional} if for all $\varphi$ and $\psi$:
$$ T\vdash \varphi \leftrightarrow \psi \Longrightarrow T\vdash \mathfrak{g}(\varphi) \leftrightarrow \mathfrak{g}(\psi).$$ Shavrukov and Visser \cite{shavrukov2014uniform} proved that there is a recursive extensional density function on the Lindenbaum algebra of $T$, which precludes any direct analogue Martin's Conjecture in this context. Nevertheless, positive results are available if we replace \emph{extensionality} with the stronger condition of \emph{monotonicity}. Say that a recursive function $\mathfrak{g}$ is \emph{monotone} if for all $\varphi$ and $\psi$:
$$ T\vdash \varphi \to \psi \Longrightarrow T\vdash \mathfrak{g}(\varphi) \to \mathfrak{g}(\psi).$$
We will be exclusively concerned with recursive monotone $\mathfrak{g}$ in this paper. We will be primarily interested in truth-preserving operators with this property, such as the consistency operator.

Monotonicity is an analogue of the recursion-theoretic notion \emph{order-preserving}; $f$ is order-preserving if for all reals $x$ and $y$:
$$x\leq_T y \Longrightarrow f(x) \leq_T f(y).$$
Note that Lutz and Siskind \cite{lutz2021results} proved Part 1 of Martin's Conjecture for order-preserving functions and Slaman and Steel \cite{slaman1988definable} proved Part 2 of Martin's Conjecture for order-preserving Borel functions.

G\"{o}del's second incompleteness theorem tells us that the monotone function $$\varphi \mapsto \big(\varphi \wedge \mathsf{Con}_T(\varphi)\big) $$ produces a strictly stronger sentence whenever $\varphi$ is true in the standard model. That is, the consistency operator acts like a \emph{jump} on sound finite extensions of $T$. Just as with the Turing jump, there are iterates of the consistency operator into the effective transfinite.\footnote{Unlike with Turing degrees, however, transfinite iterates of the consistency operator depend on the presentation of the base theory and the well-ordering.} Indeed, \emph{fixing} a suitable elementary presentation $\prec$ of a well-ordering, we may define the iterates of the consistency operator using the fixed point lemma:
$$T\vdash \mathsf{Con}_T^\alpha(\varphi)\leftrightarrow \forall\beta\prec\alpha \; \mathsf{Con}_T\big( \varphi \wedge \mathsf{Con}_T^\beta(\varphi)\big).\footnote{A definition of suitable ordinal notations is given in \textsection \ref{notations}.}$$

In \cite{montalban2019inevitability,walsh2020note} a number of results relating recursive monotone functions to iterates of the consistency operator are established. To state one such theorem, we introduce a definition. Note that whenever we mention sentences being ``true,'' we mean ``true in the standard structure $\mathbb{N}$.''

%In \cite{montalban2019inevitability} it is shown, granting some natural constraints, that any such function must coincide with an iterate of the consistency operator \emph{cofinally}. Of course, this is weaker than saying that these functions coincide with iterates of the consistency operator \emph{in the limit}; the latter property would be a true analogue of ``Martin-equivalence.''

\begin{definition}
A \emph{cone} is any set $\mathfrak{C}$ such that, for some $\varphi$, $\mathfrak{C}=\{\psi \mid T\vdash \psi \to \varphi \}$. A \emph{true cone} is a cone that contains a true sentence.
\end{definition}

Now here is a precise statement of a theorem from \cite{walsh2020note}:

\begin{theorem}\label{MC_for_con}
Let $\mathfrak{g}$ be recursive and monotone such that, for some $k\in\mathbb{N}$, for all $\varphi$, $\mathfrak{g}(\varphi)$ is $\Pi_k$. Then one of the following holds:
\begin{enumerate}
\item For all $\varphi$ in a true cone,
$T+\varphi \vdash \mathfrak{g}(\varphi) .$
\item For all $\varphi$ in a true cone,
$T+ \varphi + \mathfrak{g}(\varphi) \vdash \mathsf{Con}_T(\varphi).$
\end{enumerate}
\end{theorem}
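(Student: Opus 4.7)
The plan is to prove the contrapositive: assume (1) fails on every true cone, and show (2) holds on some true cone. By failure of (1), the set $B = \{\psi : T + \psi \nvdash \mathfrak{g}(\psi)\}$ is downward cofinal beneath every true sentence, and because $\mathfrak{g}$ is recursive, we obtain an effective operation producing such a witness uniformly from each candidate cone base.

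The core is a diagonal fixed-point argument exploiting the $\Pi_k$-bound on $\mathfrak{g}$'s outputs. Using the fixed-point lemma, for each true $\varphi$ I would construct a sentence $\sigma_\varphi$ roughly satisfying
$$T \vdash \sigma_\varphi \leftrightarrow \Bigl( \varphi \wedge \neg \mathrm{Prv}_T \bigl( \ulcorner (\sigma_\varphi \wedge \mathfrak{g}(\sigma_\varphi)) \to \mathsf{Con}_T(\sigma_\varphi) \urcorner \bigr) \Bigr),$$
so that $\sigma_\varphi$ internally asserts the failure of (2) at itself. One then verifies that if (2) fails cofinally beneath $\varphi$, then $\sigma_\varphi$ is true, and by the cofinal failure of (1), some $\psi$ with $T \vdash \psi \to \sigma_\varphi$ lies in $B$. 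Monotonicity of $\mathfrak{g}$ links $\mathfrak{g}(\sigma_\varphi)$ and $\mathfrak{g}(\psi)$ provably in $T$, and the $\Pi_k$-bound on $\mathfrak{g}$'s output lets one apply formalized $\Pi_k$-reflection to the hypothetical model of $T + \sigma_\varphi + \mathfrak{g}(\sigma_\varphi) + \neg \mathsf{Con}_T(\sigma_\varphi)$: reflection forces this model to internally validate its $\Pi_k$-truths, contradicting its assertion of the internal inconsistency of $T + \sigma_\varphi$. This closes the contrapositive and then, by minor adjustment of $\varphi$, yields a true $\varphi_1$ such that (2) holds throughout the cone below $\varphi_1$.

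The hard step will be engineering the fixed point so that monotonicity, the self-reference, and the application of $\Pi_k$-reflection combine to produce a genuine arithmetic contradiction rather than a mere incompatible collection of consistency claims realized by some nonstandard model. This is precisely where the bounded complexity hypothesis is indispensable: $\Pi_k$-reflection is only available at bounded complexity, and the uniform $\Pi_k$-bound on $\mathfrak{g}(\varphi)$ is what makes the soundness-transfer from the external ``$T + \sigma_\varphi \nvdash \mathfrak{g}(\sigma_\varphi)$'' to an internally provable arithmetic statement go through. Without this bound, the semantic evaluation needed at the last step would have unbounded complexity and the dichotomy would collapse.
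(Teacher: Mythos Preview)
The paper does not contain a proof of this theorem: it is quoted from \cite{walsh2020note} in the introduction and never argued for here. The only information the present paper gives about the original argument is the remark in \S\ref{observations} that ``the fact that all $T$-inconsistent sentences $T$-provably imply each other is used in the proof of the positive Theorem~\ref{MC_for_con} (see the proof of Theorem~2.4 Case~2 in \cite{walsh2020note}).'' So there is nothing in this paper to compare your argument against at the level of detail you are attempting.

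That said, your sketch has genuine gaps that would have to be filled before it could be called a proof. First, the inference ``if (2) fails cofinally beneath $\varphi$, then $\sigma_\varphi$ is true'' is not justified by your fixed-point equation: the truth value of $\sigma_\varphi$ is determined by whether $T$ proves one specific implication about $\sigma_\varphi$ itself, and nothing you have written connects that to cofinal failure of (2) at \emph{other} sentences. Indeed, the alternative case---where $T$ does prove the implication, hence $T\vdash\neg\sigma_\varphi$ by $\Sigma_1$-completeness---is not ruled out. Second, your invocation of ``formalized $\Pi_k$-reflection'' is doing all the real work and is left entirely unexplained: you have not said over which theory reflection is being applied, why that reflection principle is available (it is certainly not a theorem of $T$), or how ``internally validating $\Pi_k$-truths'' conflicts with $\neg\mathsf{Con}_T(\sigma_\varphi)$, which is a $\Sigma_1$ statement about proofs, not a $\Pi_k$ statement about $\mathfrak{g}$. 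Third, the one concrete hint the paper gives about the actual proof---the role of the fact that $T$-inconsistent sentences all $T$-provably imply one another---appears nowhere in your outline, which suggests your route, even if it can be made to work, is not the intended one.
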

Roughly, this says that any sufficiently nice operator must either be as weak as the identity operator in the limit or as strong as the consistency operator in the limit. 

We had conjectured that this would be the first step of a classification of recursive monotone operators. Our hope was to prove---along the lines of Martin's conjecture---that any function meeting the hypotheses of the theorem would either be as strong as $\mathsf{Con}_T^\alpha$ in the limit or as weak as $\mathsf{Con}^\beta$ for some $\beta\prec\alpha$ in the limit. That is, we had hoped to prove a result of the following sort:\footnote{Note that Hoped For Result \ref{hoped-for} follows from Conjecture 1.8 in \cite{walsh2020note}.}
\begin{hoped}\label{hoped-for}
Let $\mathfrak{g}$ be recursive and monotone such that, for some $k\in\mathbb{N}$, for all $\varphi$, $\mathfrak{g}(\varphi)$ is $\Pi_k$. Then, for every $\alpha\succ 0$, one of the following holds:
\begin{enumerate}
\item For all $\varphi$ in a true cone, there is a $\beta\prec\alpha$ such that
$T+\varphi +\mathsf{Con}^\beta_T(\varphi)\vdash \mathfrak{g}(\varphi).$
\item For all $\varphi$ in a true cone,
$T+ \varphi + \mathfrak{g}(\varphi) \vdash \mathsf{Con}^\alpha_T(\varphi).$
\end{enumerate}
\end{hoped}

Of course, Theorem \ref{MC_for_con} already covers the $\alpha=1$ case of the hoped for result. Some results from \cite{montalban2019inevitability} provided optimism for the $\alpha\succ 1$ cases. Before stating these results, let's introduce some definitions. We say that \emph{cofinally many} true sentences belong to a set $\mathfrak{A}$ if $\mathfrak{A}$ is cofinal in the ultrafilter of true sentences, i.e., for every true $\varphi$ there is a true $\psi$ such that $T\vdash \psi \to \varphi$ and $\psi\in\mathfrak{A}$. $[\varphi]_T$ is the equivalence class of $\varphi$ modulo $T$-provable equivalence. That is:
$$[\varphi]_T=\{\psi \mid T \vdash \varphi \leftrightarrow \psi\}.$$
If $T\vdash \varphi \to \psi$, let's say that $\varphi$ \emph{$T$-provably implies} $\psi$. If $\varphi$ $T$-provably implies $\psi$ and $\psi$ does not $T$-provably imply $\varphi$ then we say that $\varphi$ \emph{strictly $T$-provably implies} $\psi$.
Here are the positive results from \cite{montalban2019inevitability}:
%We say that a set $\mathfrak{A}$ is \emph{extensional} if $\mathfrak{A}$  is closed under $T$-provable equivalence. 

\begin{theorem}\label{cofinal}
Let $\mathfrak{g}$ be recursive and monotone. Suppose that for all $\varphi$ both of the following hold:
\begin{enumerate}
    \item $\varphi\wedge \mathsf{Con}_T^\alpha(\varphi)\text{ $T$-provably implies } \mathfrak{g}(\varphi)$.
    \item If $[\mathfrak{g}(\varphi)]_T\neq[\bot]_T$, then for all $\beta\prec\alpha$, $\mathfrak{g}(\varphi)\text{ strictly $T$-provably implies } \mathsf{Con}_T^\beta(\varphi)$.
   % $$T+\mathfrak{g}(\varphi)\vdash \mathsf{Con}_T^\beta(\varphi) \text{ and } T+\varphi+ \mathsf{Con}_T^\beta(\varphi) \nvdash \mathfrak{g}(\varphi).$$
\end{enumerate}
Then for cofinally many true $\varphi$, $[\mathfrak{g}(\varphi)]_T=[\varphi\wedge\mathsf{Con}_T^\alpha(\varphi)]_T$.
\end{theorem}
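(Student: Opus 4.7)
The plan is to argue by contradiction. Suppose the conclusion fails: there exists a true $\varphi_0$ such that every true $\psi$ with $T \vdash \psi \to \varphi_0$ satisfies $[\mathfrak{g}(\psi)]_T \neq [\psi \wedge \mathsf{Con}_T^\alpha(\psi)]_T$. By hypothesis (1), $\psi \wedge \mathsf{Con}_T^\alpha(\psi)$ then strictly $T$-provably implies $\mathfrak{g}(\psi)$ for every such true $\psi$.

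Using G\"{o}del's fixed-point lemma, I would construct a self-referential sentence $\psi^*$ satisfying
$$T \vdash \psi^* \leftrightarrow \varphi_0 \wedge \bigl(\mathfrak{g}(\psi^*) \to \mathsf{Con}_T^\alpha(\psi^*)\bigr),$$
which, combined with (1), yields $T + \psi^* \vdash \mathfrak{g}(\psi^*) \leftrightarrow \mathsf{Con}_T^\alpha(\psi^*)$. The goal is then to show $\psi^*$ is true, and thereby to derive $T + \mathfrak{g}(\psi^*) \vdash \psi^* \wedge \mathsf{Con}_T^\alpha(\psi^*)$, contradicting the standing assumption applied to $\psi^*$. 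Verifying that $\psi^*$ is true reduces by case analysis to ruling out the scenario where $\psi^*$ is false, $\mathfrak{g}(\psi^*)$ is true, and $\mathsf{Con}_T^\alpha(\psi^*)$ is false. In that scenario, $\Sigma_1$-completeness of $T$ supplies a minimal standard $\beta_0 \prec \alpha$ with $T \vdash \neg\mathsf{Con}_T(\psi^* \wedge \mathsf{Con}_T^{\beta_0}(\psi^*))$, and applying (2) at $\beta_0 + 1$ (provided $\beta_0 + 1 \prec \alpha$) gives $T + \mathfrak{g}(\psi^*) \vdash \mathsf{Con}_T(\psi^* \wedge \mathsf{Con}_T^{\beta_0}(\psi^*))$, contradicting the truth, and hence $T$-consistency, of $\mathfrak{g}(\psi^*)$.

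The main obstacle will be the verification that $\psi^*$ is true. The naive $\alpha$-consistency-asserting fixed point $\psi \leftrightarrow \varphi_0 \wedge \mathsf{Con}_T^\alpha(\psi)$ is $T$-provably false by G\"{o}del's second incompleteness theorem, so the implication form above is essential; moreover, when $\alpha$ is a successor with the consistency failure sitting at the very top of the $\mathsf{Con}_T^\alpha$-chain, hypothesis (2) alone does not reach the required level, and the argument would demand either a modified fixed point or a reduction to Theorem \ref{MC_for_con}. A secondary difficulty is the uniform transition from the individual $\mathsf{Con}_T^\beta(\psi^*)$ implications of (2) to the universally quantified $\mathsf{Con}_T^\alpha(\psi^*)$ that is needed to complete the final derivation.
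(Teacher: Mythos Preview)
The paper does not contain a proof of this theorem; it is quoted from \cite{montalban2019inevitability} without argument, so there is no in-paper proof against which to compare your proposal.

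Judging the proposal on its own, it is a sketch with gaps that you partly acknowledge. The successor obstacle you flag is genuine and is not handled: if $\alpha=\gamma+1$ and the failure of $\mathsf{Con}_T^\alpha(\psi^*)$ sits at $\beta_0=\gamma$, hypothesis~(2) gives you nothing at level $\gamma+1$, and nothing in your construction closes that case. There is also a gap you do not flag. Even granting that $\psi^*$ is true, your fixed point yields only $T+\psi^*\vdash \mathfrak{g}(\psi^*)\leftrightarrow\mathsf{Con}_T^\alpha(\psi^*)$, hence $T+\psi^*+\mathfrak{g}(\psi^*)\vdash \psi^*\wedge\mathsf{Con}_T^\alpha(\psi^*)$. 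To contradict the standing assumption you must obtain $T+\mathfrak{g}(\psi^*)\vdash \psi^*\wedge\mathsf{Con}_T^\alpha(\psi^*)$, and for that you need $T+\mathfrak{g}(\psi^*)\vdash\psi^*$. Neither hypothesis supplies this: (1) goes the wrong way, and (2) only gives $T+\mathfrak{g}(\psi^*)\vdash\mathsf{Con}_T^\beta(\psi^*)$ for $\beta\prec\alpha$, not $T+\mathfrak{g}(\psi^*)\vdash\psi^*$. So even after establishing truth of $\psi^*$, the argument does not deliver the equivalence $[\mathfrak{g}(\psi^*)]_T=[\psi^*\wedge\mathsf{Con}_T^\alpha(\psi^*)]_T$ that the theorem asserts, and the chosen fixed point is too weak for the conclusion as stated.
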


This result says that if the range of a recursive monotone $\mathfrak{g}$ is sufficiently constrained, then $\mathfrak{g}$ must coincide \emph{cofinally} with an iterate of the consistency operator. In particular, if $\mathfrak{g}$ is everywhere as weak as $\mathsf{Con}_T^\alpha$ but everywhere strictly stronger than $\mathsf{Con}_T^\beta$ for all $\beta\prec\alpha$, then $\mathfrak{g}$ must coincide \emph{cofinally} with $\mathsf{Con}_T^\alpha$. As a corollary we infer that:

\begin{corollary}
There is no recursive monotone $\mathfrak{g}$ such that for all $\varphi$ such that $[\varphi\wedge\mathsf{Con}_T^\alpha(\varphi)]_T\neq[\bot]_T$, both of the following hold:
\begin{enumerate}
    \item $\varphi\wedge\mathsf{Con}_T^\alpha(\varphi) \text{ strictly $T$-provably implies } \mathfrak{g}(\varphi)$.
    \item For all $\beta\prec\alpha$, $ \mathfrak{g}(\varphi)\text{ strictly $T$-provably implies } \varphi \wedge \mathsf{Con}^\beta(\varphi)$.
\end{enumerate}
\end{corollary}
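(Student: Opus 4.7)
The plan is to prove the corollary by contradiction, reducing to Theorem~\ref{cofinal}. Suppose a recursive monotone $\mathfrak{g}$ satisfies (1) and (2) of the corollary. The strategy is to verify the hypotheses of Theorem~\ref{cofinal} for $\mathfrak{g}$, apply it to obtain $[\mathfrak{g}(\varphi)]_T = [\varphi \wedge \mathsf{Con}_T^\alpha(\varphi)]_T$ for cofinally many true $\varphi$, and then derive a contradiction with the strict implication in the corollary's (1). Soundness of $T$ guarantees $[\varphi \wedge \mathsf{Con}_T^\alpha(\varphi)]_T \neq [\bot]_T$ for every true $\varphi$, so the corollary's (1) does apply to the relevant $\varphi$ and rules out $[\mathfrak{g}(\varphi)]_T = [\varphi \wedge \mathsf{Con}_T^\alpha(\varphi)]_T$.

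Hypothesis (1) of Theorem~\ref{cofinal} is immediate: strict $T$-provable implication in the corollary's (1) weakens to $T$-provable implication, and when $[\varphi \wedge \mathsf{Con}_T^\alpha(\varphi)]_T = [\bot]_T$ the implication holds vacuously. For hypothesis (2), I would extract from the corollary's (2) both $T \vdash \mathfrak{g}(\varphi) \to \varphi$ and $T \vdash \mathfrak{g}(\varphi) \to \mathsf{Con}_T^\beta(\varphi)$ for each $\beta \prec \alpha$. Strictness of the latter transfers as follows: were $T \vdash \mathsf{Con}_T^\beta(\varphi) \to \mathfrak{g}(\varphi)$, then chaining with $T \vdash \mathfrak{g}(\varphi) \to \varphi$ would yield $T \vdash \mathsf{Con}_T^\beta(\varphi) \to \varphi$, making $\mathsf{Con}_T^\beta(\varphi)$ and $\varphi \wedge \mathsf{Con}_T^\beta(\varphi)$ $T$-provably equivalent; hence $T \vdash \varphi \wedge \mathsf{Con}_T^\beta(\varphi) \to \mathfrak{g}(\varphi)$, contradicting the strictness asserted in the corollary's (2).

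The main obstacle is verifying Theorem~\ref{cofinal}'s hypothesis (2) on the degenerate $\varphi$ with $[\varphi \wedge \mathsf{Con}_T^\alpha(\varphi)]_T = [\bot]_T$, where the corollary's hypotheses impose no constraint on $\mathfrak{g}(\varphi)$. I would either observe that Theorem~\ref{cofinal}'s proof can be adapted to require its hypothesis (2) only on non-degenerate $\varphi$ (since its conclusion restricts to true $\varphi$, on which the degenerate case cannot arise), or replace $\mathfrak{g}$ by a recursive monotone modification that trivializes the degenerate case while preserving the strict sandwich on the non-degenerate side. Either route delivers the cofinal equality that, juxtaposed against the strictness in the corollary's (1), yields the desired contradiction.
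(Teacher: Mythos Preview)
Your approach is exactly what the paper intends: it states the corollary immediately after Theorem~\ref{cofinal} with the words ``As a corollary we infer that'' and gives no further argument, so the expected derivation is precisely the reduction you describe---assume such a $\mathfrak{g}$, feed it to Theorem~\ref{cofinal}, and contradict the strictness in clause~(1) at a true $\varphi$ (where soundness guarantees $[\varphi\wedge\mathsf{Con}_T^\alpha(\varphi)]_T\neq[\bot]_T$). Your check that strict implication to $\varphi\wedge\mathsf{Con}_T^\beta(\varphi)$ transfers to strict implication to $\mathsf{Con}_T^\beta(\varphi)$ is also correct and is the one step that needs a line of justification.

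One caution about your two proposed routes around the degenerate case. The first route (observing that the proof of Theorem~\ref{cofinal} only ever instantiates its hypotheses at true $\varphi$, which are automatically non-degenerate) is the clean one and is presumably what the authors have in mind. The second route (modifying $\mathfrak{g}$) is more delicate than you indicate: the most natural candidate, $\mathfrak{g}'(\varphi):=\mathfrak{g}(\varphi)\wedge\varphi\wedge\mathsf{Con}_T^\alpha(\varphi)$, does force $[\mathfrak{g}'(\varphi)]_T=[\bot]_T$ on degenerate $\varphi$, but on the non-degenerate side it collapses to $\varphi\wedge\mathsf{Con}_T^\alpha(\varphi)$ itself (since clause~(1) already gives $T\vdash\varphi\wedge\mathsf{Con}_T^\alpha(\varphi)\to\mathfrak{g}(\varphi)$), so the conclusion of Theorem~\ref{cofinal} becomes a tautology and no contradiction with strictness ensues. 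If you pursue the modification route you would need something subtler; better to simply take the first route.
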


This is just to say that there is no recursive monotone function of \emph{strictly intermediate} strength, i.e., a function that is everywhere strictly stronger than $\mathsf{Con}^\beta$ for all $\beta\prec\alpha$ yet strictly weaker than $\mathsf{Con}_T^\alpha$. This rules out the most obvious sort of counter-example one might expect to Hoped For Result \ref{hoped-for}.

Nevertheless, in this paper we will see that this hoped for result fails. In fact, it fails already at the very next step, i.e., $\alpha=2$. For any $\alpha\succ 0$, we can construct a function that is cofinally as strong than $\mathsf{Con}^\alpha_T$ yet cofinally as weak as $\mathsf{Con}_T$: 
\begin{theorem}\label{main-thm}
For every $\alpha\succ 0$, there is a recursive monotone $\mathfrak{g}$ such that, for all $\varphi$, $\mathfrak{g}(\varphi)$ is $\Pi_1$, and both of the following hold:
\begin{enumerate}
\item For cofinally many true $\varphi$:
$$[\varphi \wedge \mathfrak{g}(\varphi)]_T = [\varphi \wedge \mathsf{Con}^\alpha_T(\varphi)]_T.$$
\item For cofinally many true $\varphi$:
$$[\varphi \wedge\mathsf{Con}_T(\varphi)]_T= [\varphi \wedge \mathfrak{g}(\varphi)]_T.$$
\end{enumerate}
\end{theorem}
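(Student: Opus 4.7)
I would define $\mathfrak{g}$ by a $\Pi_1$ formula whose effective strength modulo $\varphi$ is controlled by an auxiliary $T$-antitone ``switch'' $p(\varphi)$:
\[
\mathfrak{g}(\varphi) \;:=\; \forall\gamma\prec\alpha\;\mathsf{Con}_T\bigl(\varphi \wedge (p(\varphi) \to \mathsf{Con}_T^\gamma(\varphi))\bigr).
\]
This is always $\Pi_1$ since $\mathsf{Con}_T$ applied to any formula is $\Pi_1$, regardless of the complexity of $p$. If $T \vdash \varphi \to p(\varphi)$, each inner implication simplifies to $\mathsf{Con}_T^\gamma(\varphi)$ modulo $T$, yielding $\mathfrak{g}(\varphi) \equiv_T \mathsf{Con}_T^\alpha(\varphi)$. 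If instead $T \vdash \varphi \to \neg p(\varphi)$, each inner implication is vacuously true modulo $\varphi$, each conjunct collapses to $\mathsf{Con}_T(\varphi)$, and $\mathfrak{g}(\varphi) \equiv_T \mathsf{Con}_T(\varphi)$. Monotonicity of $\mathfrak{g}$ will follow from $T$-antitonicity of $p$ (i.e., $T \vdash \varphi \to \psi$ implies $T \vdash p(\psi) \to p(\varphi)$) combined with the monotonicity of each $\mathsf{Con}_T^\gamma$ in its argument: one verifies directly that $T \vdash (\varphi \wedge (p(\varphi) \to \mathsf{Con}_T^\gamma(\varphi))) \to (\psi \wedge (p(\psi) \to \mathsf{Con}_T^\gamma(\psi)))$ for each $\gamma$, and $\mathsf{Con}_T$ preserves this.

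The task therefore reduces to constructing a $T$-antitone $p$ such that (i) some true $\varphi_0$ satisfies $T \vdash \varphi_0 \to p(\varphi_0)$---which by antitonicity automatically yields cofinality in true sentences, since any $\chi \wedge \varphi_0$ for true $\chi$ then also satisfies this---and (ii) cofinally many true $\varphi$ satisfy $T \vdash \varphi \to \neg p(\varphi)$. A natural candidate is $p(\varphi) := \mathsf{Prov}_T(\neg\varphi \vee \sigma)$ for a chosen sentence $\sigma$; antitonicity is immediate, and (i) holds with $\varphi_0 = \sigma$ whenever $\sigma$ is true. Condition (ii) then amounts to finding cofinally many true $\varphi$ with $T + \varphi \vdash \mathsf{Con}_T(\varphi \wedge \neg\sigma)$.

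The hard part will be achieving (ii) while avoiding a Löbian collapse. A na\"ive self-referential witness $\varphi \leftrightarrow \chi \wedge \mathsf{Con}_T(\varphi \wedge \neg\sigma)$ forces $T \vdash \varphi \to \sigma$ via L\"ob's theorem, hence $T \vdash \neg\mathsf{Con}_T(\varphi \wedge \neg\sigma)$ and thus $T \vdash \neg\varphi$, producing no true witnesses at all. The escape I would pursue is to build the witness $\varphi$ from a higher iterate of consistency calibrated to $\alpha$---for instance $\varphi := \chi \wedge \mathsf{Con}_T^{\alpha+1}(\chi \wedge \neg\sigma)$---so that $T + \varphi$ proves enough iterated consistency to derive the required single-layer $\mathsf{Con}_T(\varphi \wedge \neg\sigma)$ without the self-referential structure that L\"ob needs to collapse. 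The definitions of $p$, $\sigma$, and the witnessing family may have to be co-adjusted; in particular, for limit $\alpha$ the quantifier $\forall\gamma\prec\alpha$ is genuinely nontrivial and will need to be handled by treating a cofinal sequence of approximations separately. Verifying both cofinality conditions simultaneously while preserving $T$-antitonicity of $p$ is where the main technical work lies.
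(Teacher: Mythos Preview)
Your framework has a structural obstruction that no choice of $p$ can repair. The antitonicity you need for monotonicity of $\mathfrak{g}$ forces the ``on'' set $\{\varphi : T+\varphi\vdash p(\varphi)\}$ to be upward-closed under $T$-provable implication: if $T+\varphi_0\vdash p(\varphi_0)$ and $T\vdash\varphi\to\varphi_0$, then $T+\varphi\vdash p(\varphi_0)$, and antitonicity gives $T\vdash p(\varphi_0)\to p(\varphi)$, whence $T+\varphi\vdash p(\varphi)$. (This is exactly the argument you use to get cofinality in (i).) So once (i) holds for a single true $\varphi_0$, the entire true cone above $\varphi_0$ is ``on''; no true $\varphi$ in that cone can satisfy $T+\varphi\vdash\neg p(\varphi)$ without $T+\varphi$ being inconsistent. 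Hence (ii) can never be cofinal. The L\"obian collapse you flagged for $p(\varphi)=\mathsf{Pr}_T(\varphi\to\sigma)$ is just a special case: $T+\varphi\vdash\mathsf{Con}_T(\varphi\wedge\neg\sigma)$ yields $T+\varphi\wedge\neg\sigma\vdash\mathsf{Con}_T(\varphi\wedge\neg\sigma)$, so G\"odel~II gives $T+\varphi\vdash\sigma$ and thus $T\vdash p(\varphi)$, \emph{independently of how $\varphi$ was built}. The higher-iterate escape therefore cannot work; the problem is the two-state switch architecture, not the witness construction.

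The paper's $\mathfrak{g}$ abandons the switch idea. It first builds a recursively enumerable set $\mathfrak{A}_\alpha$, a binary tree of sentences in which each node's immediate descendants $T$-provably imply $\mathsf{Con}_T^{\alpha+1}$ of that node, and then sets
\[
\mathfrak{g}(\varphi):=\forall\theta\in\mathfrak{A}_\alpha\bigl(\mathsf{Pr}_T(\varphi\to\theta)\to\mathsf{Con}_T^\alpha(\theta)\bigr).
\]
Monotonicity is automatic since a stronger $\varphi$ has more $T$-consequences in $\mathfrak{A}_\alpha$. The strength of $\mathfrak{g}(\varphi)$ is governed by the \emph{strongest} $\theta\in\mathfrak{A}_\alpha$ that $\varphi$ $T$-provably implies: for $\varphi\in\mathfrak{A}_\alpha$ this is $\varphi$ itself, giving $\mathfrak{g}(\varphi)\equiv_T\mathsf{Con}_T^\alpha(\varphi)$; for $\varphi:=\psi\wedge\mathsf{Con}_T^\alpha(\psi)$ with $\psi\in\mathfrak{A}_\alpha$, the $\mathsf{Con}_T^{\alpha+1}$-sized gap ensures (provably in $T$ from $\mathsf{Con}_T(\varphi)$) that $\psi$ is \emph{still} the strongest such $\theta$, so $\mathfrak{g}(\varphi)$ adds nothing beyond what $\varphi\wedge\mathsf{Con}_T(\varphi)$ already knows. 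The oscillation comes from the geometry of $\mathfrak{A}_\alpha$, not from an antitone predicate that toggles.
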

This the most dramatic possible failure of the Hoped For Result \ref{hoped-for} that is consistent with Theorem \ref{MC_for_con}.

Of course, this result is somewhat \emph{negative} considering the context in which it is proved. That is, it shows that the hoped for analogue of Martin's Conjecture fails. Yet this result also highlights something rather surprising about the consistency operator. Indeed, the consistency operator stands apart from its iterates with respect to its inevitability.

Here is our plan for the rest of the paper. In \textsection \ref{preliminaries}, we will cover some preliminaries. In \textsection \ref{sets} we will cover the main technical aspect of our result, which is the construction of pathological recursively enumerable sets. In particular, these sets contain cofinally many true sentences but do not contain any true cones. In \textsection \ref{proof} we will present the proof of the main theorem. In \textsection \ref{observations} we will conclude with some observations; in particular, we will discuss the relationship between the negative Theorem \ref{main-thm} and the positive Theorem \ref{MC_for_con}.

%%%%%%%%%%%%%%%%%%%%%%%%%%%%%%%%%%%%%%%%%%%%%%%%
%%%%%%%%%%%%%%%%%%%%%%%%%%%%%%%%%%%%%%%%%%%%%%%%
%%%%%%%%%%%%%%%%%%%%%%%%%%%%%%%%%%%%%%%%%%%%%%%%

\section{Preliminaries}\label{preliminaries}

In this section we will fix some notation and cover some preliminaries. The main goal of this section is to introduce the iterations of the consistency operator (relative to a fixed ordinal notation system) and prove that they are monotone.

\subsection{Base Theory}

The theories we will be interested in are extensions of elementary arithmetic or $\mathsf{EA}$. The signature of $\mathsf{EA}$ is the usual signature of $\mathsf{PA}$ with a function symbol for $2^x$. $\mathsf{EA}$ has as axioms the $\mathsf{PA}^-$ axioms plus induction for all formulas bounded in an exponential term. $\mathsf{EA}$ is just strong enough to carry out the standard arithmetization of syntax in the usual manner. For details about $\mathsf{EA}$ see \cite{beklemishev2005reflection}.

One of the crucial features of $\mathsf{EA}$ is $\Sigma_1$-completeness, which we will use. This is just to say that for any $T$ extending $\mathsf{EA}$ and any $\Sigma_1$ sentence $\varphi$:
$$\mathbb{N}\vDash \varphi \Rightarrow T\vdash \varphi.$$

From here on out we will fix a sound, elementarily axiomatized extension $T$ of $\mathsf{EA}$. By saying that $T$ is sound we mean that for all $\varphi$:
$$T\vdash \varphi \Rightarrow \mathbb{N}\vDash \varphi.$$
Of course, this means that for every $\Sigma_1$ sentence $\varphi$ we have:
$$\mathbb{N}\vDash \varphi \Leftrightarrow T\vdash \varphi.$$

\subsection{Ordinal Notations}\label{notations}

There are many ways of reasoning with ordinal notations in elementary arithmetic. We will not need to work with ordinal notation systems in much detail; much of what we do can be done given any ``reasonable'' choice. For present purposes, it is enough to briefly mention a few properties our ordinal notations will have. We will call our presentations \emph{suitable} presentations.

%Officially, we will work with Beklemishev's \emph{nice} presentations of well-orderings; see \textsection 2.3 in \cite{beklemishev1995iterated}. 

Every suitable ordinal notation system is a pair $(D,\prec)$ of elementary formulas, such that:
\begin{enumerate}
\item the relation $\prec$ well-orders $D$ in the standard model of arithmetic;
\item $D$ is provably closed under successor;
\item $\mathsf{EA}$ proves that $\prec$ linearly orders the elements satisfying $D$;\footnote{The referee has pointed out that provable linearity is not required; rather, we only need that the relation is $T$-provably transitive and irreflexive.}
%\item it is elementarily calculable whether an element represents zero or a successor or a limit;
\item the elementary formulas defining the initial ordinal 0 (which need not be the natural number 0), the set of limit ordinals, and the successor relation provably in $\mathsf{EA}$ satisfy their corresponding first order definitions in terms of $\prec$.
\end{enumerate}

\subsection{Iterated Consistency}

Fixing a suitable elementary presentation $\prec$ of a well-ordering, we may define the iterates of the consistency operator using the fixed point lemma. Technically, we find a formula $\mathbf{Con}^\star$ in two variables:
$$T\vdash \mathbf{Con}^\star(\alpha,\varphi)\leftrightarrow \forall\beta\prec\alpha \; \mathsf{Con}_T\big( \varphi \wedge \mathbf{Con}^\star_T(\beta,\varphi)\big).$$
We write $\mathsf{Con}_T^\alpha(\varphi)$ as an abbreviation for $\mathbf{Con}^\star(\alpha,\varphi)$.

\begin{remark}
We warn the reader that there is some discrepancy between our notation and the notation used by other authors. Our iteration scheme $$\mathsf{Con}_T^{\alpha+1}(\varphi)\equiv \mathsf{Con}_T\big( \varphi \wedge \mathsf{Con}_T^\alpha(\varphi)\big)$$ is sometimes denoted $\mathsf{Con}(T+\varphi)_\alpha$; see, e.g., \cite{beklemishev1995iterated}. Moreover, the notation $\mathsf{Con}_T^{\alpha+1}(\varphi)$ is sometimes used to denote $\mathsf{Con}_T\big(\mathsf{Con}^\alpha_T(\varphi))$; see, e.g., \cite{beklemishev1991provability}.
\end{remark}

\begin{remark}
Note that all of the iterates of the consistency operator we have defined are \emph{sentences}. This is appropriate for the current investigation since we are interested in functions on the Lindenbaum algebra of $T$. For instance, given our definition, for a limit $\lambda$, the sentence $\mathsf{Con}^\lambda_T(\varphi)$ says $\forall \alpha\prec \lambda \; \mathsf{Con}_T\big( \varphi \wedge \mathsf{Con}_T^\alpha(\varphi) \big)$. This clashes with the conventions adopted in some other papers, wherein $\mathsf{Con}_T^\lambda(\varphi)$ is used as a name for the infinite set $\{ \mathsf{Con}_T^\alpha(\varphi) \mid \alpha \prec \lambda \}$.
\end{remark}

Note that:\footnote{Here we use the fourth clause of our discussion of suitable ordinal notation systems.}
\begin{flalign*}
T &\vdash \mathsf{Con}^0_T(\varphi)\leftrightarrow \top\\
T &\vdash \mathsf{Con}^1_T(\varphi)\leftrightarrow \mathsf{Con}_T(\varphi)
\end{flalign*}
Since suitable ordinal notations define well-orderings over the standard structure $\mathbb{N}$, it follows by induction that for true $\varphi$, $\mathsf{Con}_T^\alpha(\varphi)$ is also true. Thus, for true $\varphi$ the hierarchy $\mathsf{Con}_T^\alpha(\varphi)$ is proper by G\"{o}del's second incompleteness theorem.

It is immediate from the definition that, for any $\varphi$, the sentence $\mathsf{Con}_T^\alpha(\varphi)$ is $\Pi_1$.

To verify that each function $\varphi \mapsto \mathsf{Con}^\alpha_T(\varphi)$ is monotone, we rely on Schmerl's \cite{schmerl1979fine} technique of reflexive induction. Reflexive induction is a way of simulating large amounts of transfinite induction in weak theories. It is particularly useful for proving claims about iterated reflection principles. The technique is facilitated by the following theorem; we include the proof, since it is so short.

\begin{theorem}[Schmerl]\label{schmerlRI}
Let $T$ be a recursively axiomatized theory containing $\mathsf{EA}$. Suppose $T \vdash \forall \alpha \Big( \mathsf{Pr}_T\big(\forall \beta\prec \alpha \; A(\beta)\big) \to A(\alpha) \Big).$
Then $T\vdash \forall \alpha \; A(\alpha)$.\footnote{Schmerl proves his result using the base theory $\mathsf{PRA}$. Beklemishev has shown that $\mathsf{EA}$ suffices.}
\end{theorem}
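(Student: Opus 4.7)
My plan is to deduce Schmerl's reflexive induction principle from L\"ob's theorem applied to the sentence $C := \forall \alpha\, A(\alpha)$. By L\"ob's theorem, to obtain $T \vdash C$ it suffices to prove $T \vdash \mathsf{Pr}_T(C) \to C$, and I would approach this by a direct internal argument.

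Reasoning inside $T$, I would assume $\mathsf{Pr}_T(C)$, fix an arbitrary $\alpha$, and aim to derive $A(\alpha)$. The key auxiliary observation is that the implication $\forall \gamma\, A(\gamma) \to \forall \beta \prec \alpha\, A(\beta)$ is a first-order validity (with $\alpha$ free), so the first derivability condition together with internal modus ponens and the standard formalization of numeral substitution inside the provability predicate yields
$$T \vdash \forall \alpha\, \big( \mathsf{Pr}_T(C) \to \mathsf{Pr}_T(\forall \beta \prec \alpha\, A(\beta)) \big).$$
From this and the standing assumption $\mathsf{Pr}_T(C)$ we deduce $\mathsf{Pr}_T(\forall \beta \prec \alpha\, A(\beta))$, and then the hypothesis of the theorem, instantiated at $\alpha$, immediately delivers $A(\alpha)$. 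Since $\alpha$ was arbitrary, this establishes $C$, completing the derivation of $T \vdash \mathsf{Pr}_T(C) \to C$, whence L\"ob's theorem yields $T \vdash \forall \alpha\, A(\alpha)$.

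The only real technical subtlety is the uniformity step, i.e., passing from $\mathsf{Pr}_T(\forall \gamma\, A(\gamma))$ to $\forall \alpha\, \mathsf{Pr}_T(\forall \beta \prec \alpha\, A(\beta))$ inside $T$. This is a routine manipulation with the L\"ob derivability conditions, but it relies on the fact that $\mathsf{EA}$ formalizes substitution of numerals into the provability predicate; I expect this is where Beklemishev's strengthening (replacing Schmerl's original $\mathsf{PRA}$ base with $\mathsf{EA}$) does its work, though the overall L\"ob-based structure of the argument is unchanged.
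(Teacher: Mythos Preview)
Your proposal is correct and follows essentially the same route as the paper: derive $T \vdash \mathsf{Pr}_T(\forall\alpha\,A(\alpha)) \to \forall\alpha\,\mathsf{Pr}_T(\forall\beta\prec\alpha\,A(\beta)) \to \forall\alpha\,A(\alpha)$ and then apply L\"ob's theorem. The paper presents this in a two-line display without spelling out the uniformity step you describe, but the argument is the same.
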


\begin{proof}
Suppose that $T \vdash \forall \alpha \Big( \mathsf{Pr}_T\big(\forall \beta\prec \alpha \; A(\beta)\big) \to A(\alpha) \Big).$ We infer that:
\begin{flalign*}
T \vdash \mathsf{Pr}_T\big(\forall \alpha \; A(\alpha)\big) &\to \forall \alpha \mathsf{Pr}_T\big( \forall\beta\prec\alpha \; A(\alpha) \big) \\
& \to \forall \alpha A(\alpha).
\end{flalign*}

L\"{o}b's Theorem then yields $T\vdash \forall \alpha \; A(\alpha)$.
\end{proof}

We now will verify that the function $\varphi \mapsto\mathsf{Con}_T^\alpha(\varphi)$ is monotone.

\begin{lemma}\label{first-mono}
For any $\varphi$ and $\psi$, if $T\vdash \varphi \to \psi$ then $T\vdash \forall \alpha \big( \mathsf{Con}_T^\alpha(\varphi) \to \mathsf{Con}_T^\alpha(\psi) \big)$.
\end{lemma}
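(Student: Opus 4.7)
The natural plan is to apply Schmerl's reflexive induction (Theorem~\ref{schmerlRI}) to the formula $A(\alpha) := \mathsf{Con}_T^\alpha(\varphi) \to \mathsf{Con}_T^\alpha(\psi)$. So we work inside $T$, fix $\alpha$, and assume the reflexive induction hypothesis $\mathsf{Pr}_T\bigl(\forall \beta \prec \alpha \; A(\beta)\bigr)$; the goal is to derive $A(\alpha)$.

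Suppose $\mathsf{Con}_T^\alpha(\varphi)$; unfolding the fixed-point definition, this says $\forall \beta \prec \alpha \; \mathsf{Con}_T\bigl(\varphi \wedge \mathsf{Con}_T^\beta(\varphi)\bigr)$. To derive $\mathsf{Con}_T^\alpha(\psi)$, fix an arbitrary $\beta \prec \alpha$ and aim for $\mathsf{Con}_T\bigl(\psi \wedge \mathsf{Con}_T^\beta(\psi)\bigr)$. From the reflexive hypothesis, together with $\Sigma_1$-completeness applied to $\beta \prec \alpha$, we extract $\mathsf{Pr}_T\bigl(\mathsf{Con}_T^\beta(\varphi) \to \mathsf{Con}_T^\beta(\psi)\bigr)$. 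Separately, since by hypothesis $T \vdash \varphi \to \psi$, $\Sigma_1$-completeness gives $\mathsf{Pr}_T(\varphi \to \psi)$. Combining these two provability facts inside the provability predicate yields
\[
\mathsf{Pr}_T\Bigl(\bigl(\varphi \wedge \mathsf{Con}_T^\beta(\varphi)\bigr) \to \bigl(\psi \wedge \mathsf{Con}_T^\beta(\psi)\bigr)\Bigr).
\]
The standard provability-logic principle that $\mathsf{Pr}_T(X \to Y) \to \bigl(\mathsf{Con}_T(X) \to \mathsf{Con}_T(Y)\bigr)$ (i.e., the contrapositive of closure under modus ponens inside $\mathsf{Pr}_T$) then converts our assumption $\mathsf{Con}_T\bigl(\varphi \wedge \mathsf{Con}_T^\beta(\varphi)\bigr)$ into $\mathsf{Con}_T\bigl(\psi \wedge \mathsf{Con}_T^\beta(\psi)\bigr)$, as required.

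Having verified the reflexive induction premise, Theorem~\ref{schmerlRI} delivers $T \vdash \forall \alpha \, A(\alpha)$, which is the claim. The only real subtlety—and the place to be careful in writing—is the passage from $\mathsf{Pr}_T\bigl(\forall \beta \prec \alpha \; A(\beta)\bigr)$ to $\mathsf{Pr}_T(A(\beta))$ for a fixed $\beta \prec \alpha$; this is routine given that $\prec$ is elementary and $T \supseteq \mathsf{EA}$, but it is the step that genuinely uses the assumption that the notation system is suitable in the sense of \textsection\ref{notations}. Everything else is an application of the basic derivability conditions for $\mathsf{Pr}_T$, so no further combinatorial work is needed.
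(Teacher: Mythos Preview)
Your proof is correct and follows essentially the same route as the paper: both apply Schmerl's reflexive induction to $A(\alpha):=\mathsf{Con}_T^\alpha(\varphi)\to\mathsf{Con}_T^\alpha(\psi)$, unfold the fixed-point definition, and use the reflexive hypothesis together with $T\vdash\varphi\to\psi$ under $\mathsf{Pr}_T$ to push the conjunct through $\mathsf{Con}_T$. You are in fact slightly more explicit than the paper about the passage from $\mathsf{Pr}_T\bigl(\forall\beta\prec\alpha\,A(\beta)\bigr)$ to $\mathsf{Pr}_T(A(\beta))$ via provable $\Sigma_1$-completeness of $\beta\prec\alpha$, which the paper leaves implicit.
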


\begin{proof}
Suppose that:
\begin{equation}\label{first}
    T\vdash \varphi \to \psi.
\end{equation}
We prove the claim by reflexive induction.

\emph{Reason in $T$:} Let $\alpha$ be arbitrary. Assume the reflexive induction hypothesis:
\begin{equation}\label{second}
 \mathsf{Pr}_T\Big(  \forall\beta\prec\alpha \; \big( \mathsf{Con}_T^\beta(\varphi) \to \mathsf{Con}_T^\beta(\psi) \big) \Big).
\end{equation}

We reason as follows:
\begin{flalign*}
\mathsf{Con}_T^\alpha(\varphi) &\to \forall\beta\prec\alpha \; \mathsf{Con}_T\big(\varphi \wedge \mathsf{Con}_T^\beta(\varphi)\big) \text{ by definition of $\mathsf{Con}_T^\alpha$;}\\
&\to \forall\beta\prec\alpha \; \mathsf{Con}_T\big(\psi \wedge \mathsf{Con}_T^\beta(\varphi)\big) \text{ by \ref{first} and monotonicity of $\mathsf{Con}_T$;}\\
&\to \forall\beta\prec\alpha \; \mathsf{Con}_T\big(\psi \wedge \mathsf{Con}_T^\beta(\psi)\big) \text{ by \ref{second};}\\
&\to  \mathsf{Con}_T^\alpha(\psi) \text{ by definition of $\mathsf{Con}^\alpha_T$.}
\end{flalign*}

\emph{Reasoning externally now:} Let $A(\gamma)$ denote the claim: $$\mathsf{Con}^\gamma_T(\varphi)\to\mathsf{Con}^\gamma_T(\psi) \big.$$

We have shown that:
$$T\vdash \forall \alpha \Big( \mathsf{Pr}_T\big(\forall\beta\prec \alpha \; A(\beta)\big) \to A(\alpha) \Big).$$
By applying Lemma \ref{schmerlRI}, we infer that $T\vdash \forall \alpha \; A(\alpha)$.
\end{proof}

We want to see not only that $\alpha$-iterated consistency is monotone but also that it is provably monotone in $T$. We carry out this argument in two steps.

\begin{corollary}\label{second-mono}
$T\vdash \forall \varphi \forall \psi \Bigg( \mathsf{Pr}_T(\varphi\to\psi)\to \mathsf{Pr}_T\Big( \forall \alpha \big(\mathsf{Con}_T^\alpha(\varphi) \to \mathsf{Con}_T^\alpha(\psi)\big)\Big) \Bigg).$
\end{corollary}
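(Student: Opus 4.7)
The plan is to internalize the proof of Lemma \ref{first-mono}, so that the meta-implication ``$T \vdash \varphi \to \psi$ implies $T \vdash \forall \alpha (\mathsf{Con}_T^\alpha(\varphi) \to \mathsf{Con}_T^\alpha(\psi))$'' reflects to the arithmetic implication $\mathsf{Pr}_T(\varphi \to \psi) \to \mathsf{Pr}_T(\forall \alpha (\mathsf{Con}_T^\alpha(\varphi) \to \mathsf{Con}_T^\alpha(\psi)))$, uniformly in $\varphi$ and $\psi$.

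Write $C(\alpha)$ for the formula $\mathsf{Con}_T^\alpha(\varphi) \to \mathsf{Con}_T^\alpha(\psi)$. In the proof of Lemma \ref{first-mono}, the meta-hypothesis $T \vdash \varphi \to \psi$ enters only through the internal fact $\mathsf{Pr}_T(\varphi \to \psi)$ that it yields via necessitation; this fact, together with the provable monotonicity of $\mathsf{Con}_T$ (i.e.\ $\mathsf{Pr}_T(X \to Y) \to (\mathsf{Con}_T(X) \to \mathsf{Con}_T(Y))$) and with the reflexive induction hypothesis $\mathsf{Pr}_T(\forall \beta \prec \alpha \, C(\beta))$, is all that drives the four-step chain of implications yielding $C(\alpha)$. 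Running the same chain inside $T$ with $\mathsf{Pr}_T(\varphi \to \psi)$ now serving as an \emph{internal} assumption therefore gives
\[
T \vdash \mathsf{Pr}_T(\varphi \to \psi) \to \forall \alpha \Big( \mathsf{Pr}_T(\forall \beta \prec \alpha \, C(\beta)) \to C(\alpha) \Big).
\]
Applying necessitation and distributivity of $\mathsf{Pr}_T$, then chaining with provable $\Sigma_1$-completeness $T \vdash \mathsf{Pr}_T(\varphi \to \psi) \to \mathsf{Pr}_T\mathsf{Pr}_T(\varphi \to \psi)$, upgrades this to
\[
T \vdash \mathsf{Pr}_T(\varphi \to \psi) \to \mathsf{Pr}_T\Big( \forall \alpha \big( \mathsf{Pr}_T(\forall \beta \prec \alpha \, C(\beta)) \to C(\alpha) \big) \Big).
\]

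To collapse the inner premise I internalize Schmerl's theorem. Inspection of the proof of Theorem \ref{schmerlRI} shows it to consist of one use of distributivity of $\mathsf{Pr}_T$ over implication followed by one application of L\"ob's theorem; both steps are formalizable in $T$, so the same two-line derivation internalizes to
\[
T \vdash \mathsf{Pr}_T\Big( \forall \alpha \big( \mathsf{Pr}_T(\forall \beta \prec \alpha \, A(\beta)) \to A(\alpha) \big) \Big) \to \mathsf{Pr}_T\big( \forall \alpha \, A(\alpha) \big)
\]
for every formula $A(\alpha)$. Instantiating $A := C$ and chaining with the previous display delivers the corollary. The main obstacle is the bookkeeping around box depths across the successive applications of necessitation, distributivity, and provable $\Sigma_1$-completeness; conceptually, nothing new beyond the proof of Lemma \ref{first-mono} is required.
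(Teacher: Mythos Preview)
Your proposal is correct and is exactly the paper's approach spelled out in detail: the paper's proof is the single sentence ``Since the proof of Lemma~\ref{first-mono} can be carried out in $T$,'' and what you have written is precisely the verification that this internalization goes through---formalizing the reflexive-induction step under the internal hypothesis $\mathsf{Pr}_T(\varphi\to\psi)$, then applying necessitation, distributivity, provable $\Sigma_1$-completeness, and the formalized version of Schmerl's theorem (via formalized L\"ob).
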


\begin{proof}
Since the proof of Lemma \ref{first-mono} can be carried out in $T$. 
\end{proof}

\begin{corollary}\label{third-mono}
$T\vdash \forall \varphi \forall \psi \Big( \mathsf{Pr}_T(\varphi\to\psi)\to  \forall \alpha \big(\mathsf{Con}_T^\alpha(\varphi) \to \mathsf{Con}_T^\alpha(\psi)\big) \Big).$
\end{corollary}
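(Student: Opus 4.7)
I would prove this by re-running the reflexive-induction argument from Lemma~\ref{first-mono}, but this time treating the hypothesis $\varphi\to\psi$ as an \emph{internal} assumption $\mathsf{Pr}_T(\varphi\to\psi)$ rather than an external one. Reason in $T$: fix arbitrary $\varphi$ and $\psi$ and assume $\mathsf{Pr}_T(\varphi\to\psi)$. It suffices to derive $\forall \alpha\,(\mathsf{Con}_T^\alpha(\varphi)\to\mathsf{Con}_T^\alpha(\psi))$; discharging the assumption and universally generalizing on $\varphi,\psi$ then gives the corollary.

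To prove the $\forall\alpha$-statement under that assumption, I would apply Theorem~\ref{schmerlRI} with $A(\alpha)$ the formula $\mathsf{Con}_T^\alpha(\varphi)\to\mathsf{Con}_T^\alpha(\psi)$. Continuing to reason in $T$, fix $\alpha$ and assume the reflexive-induction hypothesis
\[
\mathsf{Pr}_T\Big(\forall\beta\prec\alpha\,(\mathsf{Con}_T^\beta(\varphi)\to\mathsf{Con}_T^\beta(\psi))\Big).
\]
Assume $\mathsf{Con}_T^\alpha(\varphi)$ and let $\beta\prec\alpha$ be arbitrary, so that $\mathsf{Con}_T(\varphi\wedge\mathsf{Con}_T^\beta(\varphi))$. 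The two available provable implications now transport this consistency claim across: from $\mathsf{Pr}_T(\varphi\to\psi)$ and provable monotonicity of $\mathsf{Con}_T$ we obtain $\mathsf{Con}_T(\psi\wedge\mathsf{Con}_T^\beta(\varphi))$; from the reflexive-induction hypothesis we have $\mathsf{Pr}_T(\mathsf{Con}_T^\beta(\varphi)\to\mathsf{Con}_T^\beta(\psi))$, hence $\mathsf{Pr}_T\big(\psi\wedge\mathsf{Con}_T^\beta(\varphi)\to\psi\wedge\mathsf{Con}_T^\beta(\psi)\big)$, and again by provable monotonicity of $\mathsf{Con}_T$ we conclude $\mathsf{Con}_T(\psi\wedge\mathsf{Con}_T^\beta(\psi))$. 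Since $\beta\prec\alpha$ was arbitrary, the fixed-point definition of $\mathsf{Con}_T^\alpha$ yields $\mathsf{Con}_T^\alpha(\psi)$, closing the reflexive-induction step.

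\textbf{Main obstacle.} There is no serious obstacle; the argument is structurally identical to Lemma~\ref{first-mono}, with the externally-given implication $T\vdash\varphi\to\psi$ replaced by the internally-assumed $\mathsf{Pr}_T(\varphi\to\psi)$, and Corollary~\ref{second-mono} (applied to $\varphi\wedge\mathsf{Con}_T^\beta(\varphi)$ and $\psi\wedge\mathsf{Con}_T^\beta(\psi)$) supplying the internal monotonicity of $\mathsf{Con}_T$. One might hope instead to derive the corollary \emph{directly} from Corollary~\ref{second-mono} by stripping off the outer $\mathsf{Pr}_T$, but $\forall\alpha\,(\mathsf{Con}_T^\alpha(\varphi)\to\mathsf{Con}_T^\alpha(\psi))$ is not $\Sigma_1$, so neither $\Sigma_1$-completeness nor L\"ob's theorem applies; re-running the reflexive induction internally is the natural route.
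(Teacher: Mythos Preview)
Your argument is correct, but it is not the paper's. The paper does \emph{not} re-run reflexive induction; it derives Corollary~\ref{third-mono} directly from Corollary~\ref{second-mono} by unfolding the fixed-point definition of $\mathsf{Con}_T^\alpha$ just once. Reasoning in $T$ under the assumption $\mathsf{Pr}_T(\varphi\to\psi)$, Corollary~\ref{second-mono} already yields $\mathsf{Pr}_T\big(\forall\gamma(\mathsf{Con}_T^\gamma(\varphi)\to\mathsf{Con}_T^\gamma(\psi))\big)$; since $\mathsf{Con}_T^\alpha(\varphi)$ unfolds to $\forall\beta\prec\alpha\,\mathsf{Con}_T(\varphi\wedge\mathsf{Con}_T^\beta(\varphi))$, one only needs the \emph{boxed} implication $\mathsf{Pr}_T(\mathsf{Con}_T^\beta(\varphi)\to\mathsf{Con}_T^\beta(\psi))$ to pass from $\mathsf{Con}_T(\psi\wedge\mathsf{Con}_T^\beta(\varphi))$ to $\mathsf{Con}_T(\psi\wedge\mathsf{Con}_T^\beta(\psi))$. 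In other words, the very feature you flagged as an obstacle---that Corollary~\ref{second-mono} delivers the conclusion only under an extra $\mathsf{Pr}_T$---is exactly what is needed, because the inner occurrences of $\mathsf{Con}_T^\beta$ already sit beneath a $\mathsf{Con}_T$. This is shorter and avoids a second appeal to Schmerl. Your route is fine too; one small point of hygiene is that Theorem~\ref{schmerlRI} is a closure rule on $T$-provability and cannot literally be invoked under an open assumption with free $\varphi,\psi$, so you should take $A(\alpha):=\forall\varphi\forall\psi\big(\mathsf{Pr}_T(\varphi\to\psi)\to(\mathsf{Con}_T^\alpha(\varphi)\to\mathsf{Con}_T^\alpha(\psi))\big)$ and use formalized $\Sigma_1$-completeness to push $\mathsf{Pr}_T(\varphi\to\psi)$ under the box when specializing the reflexive hypothesis.
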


\begin{proof}
\emph{Reason in $T$:} Let $\varphi$ and $\psi$ be arbitrary. Suppose that:
\begin{equation}\label{implication-hyp}
    T\vdash \varphi \to \psi.
\end{equation}
Let $\alpha$ be arbitrary. We reason as follows:
\begin{flalign*}
\mathsf{Con}_T^\alpha(\varphi) &\to \forall\beta\prec\alpha \; \mathsf{Con}_T\big(\varphi \wedge \mathsf{Con}_T^\beta(\varphi)\big) \text{ by definition of $\mathsf{Con}_T^\alpha$;}\\
&\to \forall\beta\prec\alpha \; \mathsf{Con}_T\big(\psi \wedge \mathsf{Con}_T^\beta(\varphi)\big) \text{ by \ref{implication-hyp} and monotonicity of $\mathsf{Con}_T$;}\\
&\to \forall\beta\prec\alpha \; \mathsf{Con}_T\big(\psi \wedge \mathsf{Con}_T^\beta(\psi)\big) \text{ by Corollary \ref{second-mono};}\\
&\to  \mathsf{Con}_T^\alpha(\psi) \text{ by definition of $\mathsf{Con}^\alpha_T$.}
\end{flalign*}
This completes the proof of the corollary.
\end{proof}

%%%%%%%%%%%%%%%%%%%%%%%%%%%%%%%%%%%%%%%%%%%%%%%%
%%%%%%%%%%%%%%%%%%%%%%%%%%%%%%%%%%%%%%%%%%%%%%%%
%%%%%%%%%%%%%%%%%%%%%%%%%%%%%%%%%%%%%%%%%%%%%%%%

\section{Constructing Pathological Sets}\label{sets}

The main technical aspect of our result is the construction of recursively enumerable sets that contain arbitrarily strong true sentences but that have wide gaps. In this section (and the next) we will work with a fixed suitable ordinal notation system $\prec$; see \textsection \ref{notations}. We will define a set $\mathfrak{A}_\alpha$ for each ordinal notation $\alpha\succeq 0$; the size of the gaps that we leave in the set will depend on $\alpha$. 

Similar constructions of recursively enumerable sets appear in \cite{montalban2019inevitability, walsh2020note}. The goal in the construction of these sets was merely to include arbitrarily strong true sentences and to omit arbitrarily strong true sentences. These sets did not leave large enough gaps for present purposes.

Here is how we define the set $\mathfrak{A}_\alpha$:

Let $\varphi_0,\varphi_1,\dots,$ be an effective G\"{o}del numbering of arithmetical sentences. We describe the construction of $\mathfrak{A}_\alpha$ in stages. During a stage $n$ we may activate finitely many sentences; if $\psi$ is some such sentence we say that $\psi$ is \emph{active} until $\psi$ is deactivated at the later stage $n+1$.

\textbf{Stage 0:} Numerate $\top$ into $\mathfrak{A}_\alpha$; activate $\top\wedge\mathsf{Con}_T^{\alpha+1}(\top)$.

%$\varphi_0$ and $\neg\varphi_0$ into $\mathfrak{A}_\alpha$; activate $\varphi_0\wedge\mathsf{Con}_T^{\alpha+1}(\varphi_0)$ and $\neg \varphi_0\wedge\mathsf{Con}_T^{\alpha+1}(\neg \varphi_0)$.

\textbf{Stage n+1:} There are finitely many active sentences. For each active sentence $\psi$, numerate $\theta_0:= \psi \wedge \varphi_{n}$ and $\theta_1:= \psi \wedge \neg \varphi_{n}$ into $\mathfrak{A}_\alpha$. Deactivate the sentence $\psi$ and activate the sentences $\theta_0\wedge \mathsf{Con}_T^{\alpha+1}(\theta_0)$ and $\theta_1\wedge \mathsf{Con}_T^{\alpha+1}(\theta_1)$.

%We dovetail the construction with a search through $T$ proofs. If we ever see
%that $T\vdash \varphi\leftrightarrow\psi$ and $\varphi$ has already been numerated into $\mathfrak{A}_\alpha$, then we numerate $\psi$ into $\mathfrak{A}_\alpha$.

It can be useful to visualize, along with the construction of $\mathfrak{A}_\alpha$, the construction of a tree that is binary branching. The tree has $\top$ as its root. The nodes in the tree are the sentences that are numerated into $\mathfrak{A}_\alpha$. Informally, the immediate descendants of any sentence $\psi$ are the sentences that are numerated into $\mathfrak{A}_\alpha$ immediately after $\psi$ on account of $\psi$. More formally, for any sentence $\psi$ numerated into $\mathfrak{A}_\alpha$ at stage $n$, say that $\psi < \psi \wedge \mathsf{Con}_T^{\alpha+1}(\psi) \wedge \pm \varphi_{n}$; the tree ordering is the transitive closure of the ordering $<$ (note that this ordering is defined only on sentences numerated into $\mathfrak{A}_\alpha$).

%\footnote{I restrict my attention to the \emph{consistent} sentences numerated into $\mathfrak{A}_\alpha$ for the purposes of visualizing the trees. If we exclude inconsistent sentences, then, once separated, branches will not join again. Of course, if either $\varphi_0$ or $\neg\varphi_0$ is $T$-refutable then there will only be one tree.}

We will call the branches through this tree \emph{$\mathfrak{A}_\alpha$-branches}. If $\varphi$ and $\psi$ share an $\mathfrak{A}_\alpha$-branch and $\varphi$ was numerated into $\mathfrak{A}_\alpha$ at stage $n$ and $\psi$ was numerated into $\mathfrak{A}_\alpha$ at stage $k$ where $k> n$, we say that $\psi$ is a \emph{descendant} of $\varphi$. If $k=n+1$ we say that $\psi$ is an immediate descendant of $\varphi$.

\begin{remark}\label{con-imply}
Note that for each ordinal notation $\alpha$ we get a set $\mathfrak{A}_\alpha$. The gaps that we leave in $\mathfrak{A}_\alpha$ depend on $\alpha$ in the following sense: If $\varphi$ is numerated into $\mathfrak{A}_\alpha$, then $\varphi$'s immediate descendants imply $\mathsf{Con}^{\alpha+1}_T(\varphi)$.%\footnote{We restrict our attention $\varphi\neq\top$ because of a technicality. Note that, by construction of $\mathfrak{A}_\alpha$, $\top$'s successors are $\varphi_0$ and $\neg\varphi_0$. Depending on our numeration of the language of arithmetic, these might not imply $\mathsf{Con}_T^{\alpha+1}(\top)$.}
\end{remark}

We can easily check some basic properties of this set $\mathfrak{A}_\alpha$.

\begin{lemma}
$\mathfrak{A}_\alpha$ is recursively enumerable.
\end{lemma}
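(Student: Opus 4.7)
The plan is to observe that the stage-by-stage construction of $\mathfrak{A}_\alpha$ is uniformly effective, so the algorithm that executes the construction witnesses recursive enumerability. I would formalize this by inducting on the stage number to show that, at the end of stage $n$, we have computed (i) a finite list of the sentences numerated into $\mathfrak{A}_\alpha$ so far and (ii) a finite list of active sentences.

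Concretely, I would note three effectiveness facts. First, from $n$ we can effectively compute (the G\"odel code of) the sentence $\varphi_n$, since $\varphi_0,\varphi_1,\dots$ is an effective enumeration of arithmetical sentences. Second, from any sentence $\theta$ we can effectively compute the sentence $\mathsf{Con}_T^{\alpha+1}(\theta) = \mathbf{Con}^\star(\alpha+1,\theta)$: the ordinal notation $\alpha+1$ is a fixed numeral (since $D$ is closed under successor), and $\mathbf{Con}^\star$ is a single fixed arithmetical formula, so substitution is a primitive recursive syntactic operation. Third, taking conjunctions and negations of sentences is effective. Combining these, given a finite list of active sentences at stage $n$, the action of stage $n+1$ (numerating $\psi\wedge\varphi_n$ and $\psi\wedge\neg\varphi_n$ for each active $\psi$, deactivating $\psi$, and activating $\theta_0\wedge\mathsf{Con}_T^{\alpha+1}(\theta_0)$ and $\theta_1\wedge\mathsf{Con}_T^{\alpha+1}(\theta_1)$) is a computable transformation.

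Therefore the algorithm that successively executes stages $0,1,2,\dots$ outputs a recursive enumeration of $\mathfrak{A}_\alpha$. The main (and only) thing to be a little careful about is confirming that the single iterated-consistency formula $\mathbf{Con}^\star$ gives us a uniformly computable map $\theta\mapsto \mathsf{Con}_T^{\alpha+1}(\theta)$; this relies on the fact that $\mathsf{Con}_T^\beta(\varphi)$ is defined via the fixed point lemma as a single formula in the parameters $\beta$ and $\varphi$, rather than as a schema indexed by $\beta$. There is no real obstacle here, only bookkeeping.
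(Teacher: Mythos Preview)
Your proposal is correct and follows essentially the same approach as the paper, which simply says ``By construction.'' You have merely spelled out the effectiveness of each step in more detail than the paper does.
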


\begin{proof}
By construction.
\end{proof}

\begin{remark}
An important consequence of the recursive enumerability of $\mathfrak{A}_\alpha$ is that for any $\varphi$, if $\varphi\in\mathfrak{A}_\alpha$ then $T\vdash \varphi\in\mathfrak{A}_\alpha$. This follows since $T$ is $\Sigma_1$-complete.
\end{remark}

\begin{lemma}
$\mathfrak{A}_\alpha$ contains arbitrarily strong true sentences.
\end{lemma}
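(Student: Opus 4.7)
The plan is to produce a true branch through the binary tree implicit in the construction of $\mathfrak{A}_\alpha$: a sequence $\psi_0, \psi_1, \ldots$ of true sentences with $\psi_n$ numerated at stage $n$ and $\psi_n < \psi_{n+1}$ in the tree order. From such a branch, given any target true sentence $\varphi = \varphi_n$ one reads off a sufficiently strong true member of $\mathfrak{A}_\alpha$ by going to stage $n+1$.

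To build the branch I would proceed by induction on $n$. The base case is $\psi_0 := \top$, which is numerated at stage $0$ and is true. For the inductive step, assume $\psi_n$ is true and was numerated at stage $n$. Then the active sentence associated with $\psi_n$ is $\psi_n \wedge \mathsf{Con}_T^{\alpha+1}(\psi_n)$, which is true by the fact established in the preliminaries that $\mathsf{Con}_T^\beta(\varphi)$ is true whenever $\varphi$ is true. At stage $n+1$ the construction numerates the two sentences $\psi_n \wedge \mathsf{Con}_T^{\alpha+1}(\psi_n) \wedge \varphi_n$ and $\psi_n \wedge \mathsf{Con}_T^{\alpha+1}(\psi_n) \wedge \neg \varphi_n$; since $\mathbb{N}$ satisfies exactly one of $\varphi_n$ and $\neg \varphi_n$, exactly one of these numerated sentences is true, and I take that one as $\psi_{n+1}$.

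To conclude, let $\varphi$ be any true sentence and fix $n$ so that $\varphi_n = \varphi$ in the effective enumeration. Then $\psi_{n+1} \in \mathfrak{A}_\alpha$ is true, and its final conjunct, chosen on the true branch, is exactly $\varphi_n = \varphi$, so $T \vdash \psi_{n+1} \to \varphi$. Thus $\mathfrak{A}_\alpha$ contains a true sentence that $T$-provably implies $\varphi$, which is what is meant by containing arbitrarily strong true sentences.

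I do not foresee a substantial obstacle: the argument is essentially a bookkeeping exercise on the construction, and the only nontrivial ingredient is the earlier observation that iterated consistency preserves truth along suitable notations. The one point requiring mild care is simply matching the stage index $n+1$ to the sentence index $n$ that the construction uses.
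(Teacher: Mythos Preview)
Your argument is correct and is essentially the same as the paper's own proof: the paper just says ``by induction it is easy to see that there is one true active sentence at each stage'' and then observes that at stage $n+1$ the numerated sentence $\psi\wedge\varphi_n$ (for the true active $\psi$) is a true member of $\mathfrak{A}_\alpha$ $T$-provably implying $\varphi_n$. You have simply spelled out that induction explicitly by tracking the true branch $\psi_0,\psi_1,\ldots$; the bookkeeping with stage indices is handled correctly.
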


\begin{proof}
Let $\varphi_n$ be a true sentence. By induction it is easy to see that there is one true active sentence at each stage. Let's say that going into stage $n+1$ the true active sentence is $\psi$. Then at stage $n+1$ we numerate $\psi\wedge \varphi_n$ into $\mathfrak{A}_\alpha$.
\end{proof}

$\mathfrak{A}_\alpha$-branches are sets of formulas, so reasoning about $\mathfrak{A}_\alpha$-branches might seem to require second-order expressive resources. Yet we have assumed only that $T$ contains elementary arithmetic. Nevertheless, elementary arithmetic suffices for reasoning about the descendant relation. The claims we make about $\mathfrak{A}_\alpha$-branches in this paper could be translated into $T$-intelligible claims about the descendant relation. In the following lemma, for instance, we will prove a claim \emph{within $T$} about $\mathfrak{A}_\alpha$-branches. All such claims can be translated into claims about the descendant relation, though we will not give an explicit translation here.

\begin{lemma}\label{branches}
Provably in $T$, if $\psi$ and $\theta$ both belong to $\mathfrak{A}_\alpha$ but do not share an $\mathfrak{A}_\alpha$-branch, then $\psi$ and $\theta$ are jointly $T$-inconsistent.
\end{lemma}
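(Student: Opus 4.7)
The plan is to exploit the binary branching structure of the construction: whenever two elements of $\mathfrak{A}_\alpha$ fail to share a branch, they must lie below two sibling nodes, one of which is of the form $\chi\wedge\mathsf{Con}_T^{\alpha+1}(\chi)\wedge\varphi_n$ and the other of the form $\chi\wedge\mathsf{Con}_T^{\alpha+1}(\chi)\wedge\neg\varphi_n$, so the two siblings are already $T$-inconsistent and any joint descendants inherit this inconsistency.

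More precisely, I would proceed by first establishing two easy structural facts, each verified directly from the construction and then internalized in $T$ via $\Sigma_1$-completeness. First, if $\eta$ is an immediate descendant of $\chi$ in the tree, then $T\vdash \eta\to \chi$ (indeed $\eta$ literally has the shape $\chi\wedge\mathsf{Con}_T^{\alpha+1}(\chi)\wedge\pm\varphi_n$ for some $n$), and consequently if $\eta$ is any descendant of $\chi$ then $T\vdash \eta\to\chi$ by iterating along the finite chain from $\chi$ to $\eta$. Second, if $\chi$ has two immediate descendants $\chi_0,\chi_1$, then they were produced at some stage $n+1$ as $\chi\wedge\mathsf{Con}_T^{\alpha+1}(\chi)\wedge\varphi_n$ and $\chi\wedge\mathsf{Con}_T^{\alpha+1}(\chi)\wedge\neg\varphi_n$, and so $T\vdash \neg(\chi_0\wedge\chi_1)$.

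With these in hand, I would argue inside $T$ as follows. Suppose $\psi,\theta\in\mathfrak{A}_\alpha$ but do not share an $\mathfrak{A}_\alpha$-branch. Consider the finite chains of ancestors of $\psi$ and of $\theta$ back to the root $\top$; both chains start at $\top$, and since $\psi,\theta$ do not share a branch, the chains must diverge at some node $\chi$, meaning there exist distinct immediate descendants $\chi_0,\chi_1$ of $\chi$ such that $\psi$ is a descendant (possibly equal to) $\chi_0$ and $\theta$ is a descendant (possibly equal to) $\chi_1$. By the first structural fact, $T\vdash \psi\to\chi_0$ and $T\vdash\theta\to\chi_1$; by the second, $T\vdash\neg(\chi_0\wedge\chi_1)$; combining, $T\vdash \neg(\psi\wedge\theta)$, which is the desired joint $T$-inconsistency.

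The only subtle step is the formalization: the descendant relation, the stage at which a sentence is numerated, and the identification of the immediate children are all given by an elementary recursive procedure, so the statement ``$\psi,\theta\in\mathfrak{A}_\alpha$ but share no $\mathfrak{A}_\alpha$-branch'' is $\Sigma_1$, and the existence of the divergence node $\chi$ together with its two sibling descendants is witnessed by bounded search on the construction. This is where I would expect the main bookkeeping to live, though conceptually it is straightforward given that $T\supseteq\mathsf{EA}$ has $\Sigma_1$-completeness (so the whole genealogy of $\psi$ and $\theta$ is $T$-provably as described). Once the divergence witness is in hand, the two $T$-implications $\psi\to\chi_0$ and $\theta\to\chi_1$ and the inconsistency of $\chi_0\wedge\chi_1$ are available $T$-provably, and the conclusion follows by propositional logic internal to $T$.
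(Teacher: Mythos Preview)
Your proposal is correct and follows essentially the same approach as the paper: both arguments observe that descendants $T$-provably imply ancestors, locate the last common ancestor of $\psi$ and $\theta$, and use that its two immediate children have the forms $\chi\wedge\mathsf{Con}_T^{\alpha+1}(\chi)\wedge\varphi_n$ and $\chi\wedge\mathsf{Con}_T^{\alpha+1}(\chi)\wedge\neg\varphi_n$, hence are jointly $T$-inconsistent. Your additional remarks on formalization (bounded search along the finite genealogies, $\Sigma_1$-completeness) make explicit what the paper leaves implicit in its ``reason in $T$'' framing.
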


\begin{proof}
\emph{Reason in $T$:} First observe that for any two sentences $\varphi$ and $\psi$ in the tree, if $\varphi$ is a descendant of $\psi$ then $T+\varphi\vdash\psi$. 

Now let $\psi$ and $\theta$ be arbitrary sentences in $\mathfrak{A}_\alpha$ that do not share an $\mathfrak{A}_\alpha$-branch. Then there is some node $\zeta_0$ that has immediate descendants $$\zeta_1:=\zeta_0 \wedge \mathsf{Con}_T^{\alpha+1}(\zeta_0)\wedge \varphi_n$$ and $$\zeta_2:=\zeta_0 \wedge \mathsf{Con}_T^{\alpha+1}(\zeta_0)\wedge \neg \varphi_n$$ such that $T+\psi\vdash\zeta_1$ and $T+\theta\vdash \zeta_2$. But $\zeta_1$ and $\zeta_2$ are jointly inconsistent whence $\psi$ and $\theta$ are too.
\end{proof}

%\begin{lemma}
%$\mathfrak{A}_\alpha$ omits arbitrarily strong true sentences.
%\end{lemma}

%\begin{proof}
%Let $\varphi \in\mathfrak{A}_\alpha$ be true. Since $\varphi$ is true, $\varphi$ and $\varphi \wedge \mathsf{Con}_T(\varphi)$ are consistent with each other. So, if $\varphi \wedge \mathsf{Con}_T(\varphi) \in \mathfrak{A}_\alpha$, by Lemma \ref{branches}, then $\varphi$ and $\varphi \wedge \mathsf{Con}_T(\varphi)$ must belong to the same branch. All of $\varphi$'s successors in $\mathfrak{A}_\alpha$ imply $\mathsf{Con}_T^{\alpha+1}(\varphi)$. By G\"{o}del's second incompleteness theorem, $\varphi \wedge \mathsf{Con}_T(\varphi)$ does not imply $\mathsf{Con}_T^{\alpha+1}(\varphi)$. So $\varphi \wedge \mathsf{Con}_T(\varphi)\notin\mathfrak{A}_\alpha$ by Remark \ref{con-imply}. Note that  $\varphi \wedge \mathsf{Con}_T(\varphi)$ is true.
%\end{proof}

\begin{lemma}\label{inconsistent}
Provably in $T$, some $T$-refutable sentence $\theta$ belongs to $\mathfrak{A}_\alpha$.
%Provably in $T$, if $T\vdash\neg\varphi$, then $\varphi\in\mathfrak{A}_\alpha$.
\end{lemma}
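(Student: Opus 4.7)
The plan is to exhibit an explicit witness. Since $\varphi_0,\varphi_1,\dots$ effectively enumerates all arithmetical sentences, fix an index $n$ with $\varphi_n$ literally the sentence $\bot$. A routine induction on stages, formalizable in $\mathsf{EA}$, shows that there is at least one active sentence going into every stage of the construction (stage $0$ activates one sentence, and every active sentence is replaced by two fresh active sentences at the next stage). In particular, some active sentence $\psi$ enters stage $n+1$, and by definition of the construction we numerate $\theta := \psi \wedge \varphi_n$ into $\mathfrak{A}_\alpha$ at that stage. Because $\varphi_n$ is $\bot$, we have $T \vdash \neg \theta$ outright.

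To upgrade this to provability in $T$: since $\mathfrak{A}_\alpha$ is recursively enumerable and $T$ extends $\mathsf{EA}$, the predicate ``$x\in\mathfrak{A}_\alpha$'' is $\Sigma_1$, and formalized $\Sigma_1$-completeness gives $T\vdash \theta \in \mathfrak{A}_\alpha$ (concretely, $T$ can simulate the first $n+1$ stages of the construction and read off the numeration of $\theta$). Likewise, formalized $\Sigma_1$-completeness applied to the $\Sigma_1$ sentence $\mathsf{Pr}_T(\ulcorner\neg\theta\urcorner)$ (which holds in $\mathbb{N}$ since $T \vdash \neg\theta$) yields $T\vdash \mathsf{Pr}_T(\ulcorner\neg\theta\urcorner)$. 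Combining, $T$ proves that this specific $\theta$ witnesses ``$\exists \theta\,\big(\theta \in \mathfrak{A}_\alpha \wedge \mathsf{Pr}_T(\ulcorner\neg\theta\urcorner)\big)$'', which is the content of the lemma.

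There is no real obstacle: the construction is blind to the truth or consistency of $\varphi_n$, so once it branches on the literal falsity $\bot$ it unavoidably drops a $T$-refutable sentence into $\mathfrak{A}_\alpha$. The only point requiring any care is choosing an explicit witness (rather than arguing nonconstructively about consistency) so that $T$ can internalize the relevant initial segment of the construction via $\Sigma_1$-completeness.
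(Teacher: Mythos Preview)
Your proof is correct and follows essentially the same line as the paper's: pick a $T$-refutable sentence (the paper uses an arbitrary $\psi$ with $T\vdash\neg\psi$, you use $\bot$ specifically), locate its index $n$ in the effective enumeration, observe that at the corresponding stage a conjunct containing $\varphi_n$ is numerated into $\mathfrak{A}_\alpha$, and then invoke $\Sigma_1$-completeness to internalize both membership and refutability in $T$. Your explicit check that some sentence is active at every stage is a detail the paper leaves implicit, but otherwise the arguments coincide.
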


\begin{proof}
Let $T\vdash \neg\psi$. Note that $\psi$ is $\varphi_n$ for some $n$. At stage $n$, we numerate a sentence $\theta$ that $T$-provably implies $\psi$ into $\mathfrak{A}_\alpha$. Note that $T$ proves both that $T\vdash \neg\theta$ and that $\theta\in\mathfrak{A}_\alpha$, since $T$ is $\Sigma_1$-complete.
\end{proof}

%Reason in $T$. Let $T\vdash\neg\varphi$. Let $\psi$ be a theorem of $T$. Note that $\psi$ is $\varphi_n$ for some $n$. At stage $n$, we numerate a sentence $\theta$ that $T$-provably implies $\neg\psi$ into $\mathfrak{A}_\alpha$. Note that $\varphi$ and $\theta$ are both $T$-refutable, hence they are $T$-provably equivalent. To see that $\varphi\in\mathfrak{A}_\alpha$, note that $\theta\in \mathfrak{A}_\alpha$ and that $\mathfrak{A}_\alpha$ is closed under $T$-provable equivalence. 
%Reason in $T$. Let $T\vdash\neg\varphi$. Let $\psi$ be a theorem of $T$. Note that $\psi$ is $\varphi_n$ for some $n$. At stage $n$, we numerate a sentence $\theta$ that $T$-provably implies $\neg\psi$ into $\mathfrak{A}_\alpha$. Note that $\varphi$ and $\theta$ are both $T$-refutable, hence they are $T$-provably equivalent. To see that $\varphi\in\mathfrak{A}_\alpha$, note that $\theta\in \mathfrak{A}_\alpha$ and that $\mathfrak{A}_\alpha$ is closed under $T$-provable equivalence. 

%%%%%%%%%%%%%%%%%%%%%%%%%%%%%%%%%%%%%%%%%%%%%%%%
%%%%%%%%%%%%%%%%%%%%%%%%%%%%%%%%%%%%%%%%%%%%%%%%
%%%%%%%%%%%%%%%%%%%%%%%%%%%%%%%%%%%%%%%%%%%%%%%%

\section{The Proof}\label{proof}

Now we are ready to prove the main theorem. In this section we provide an example of a recursive monotone function that oscillates between behaving like $\mathsf{Con}_T^\alpha$ and behaving like $\mathsf{Con}_T$. Note that this refutes the optimistic Hoped For Result \ref{hoped-for}. Indeed, no function that oscillates cofinally between behaving like $\mathsf{Con}_T^\alpha$ and behaving like $\mathsf{Con}_T$ can converge on either in the limit (assuming that $\alpha\succ 1$).

%\begin{theorem}
%For every $\alpha\succ 0$, there is a recursive monotone $\mathfrak{g}$ such that, for all $\varphi$, $\mathfrak{g}(\varphi)$ is $\Pi_1$, such that both of the following hold:
%\begin{enumerate}
%\item For cofinally many true $\varphi$:
%$T+ \varphi + \mathfrak{g}(\varphi) \vdash \mathsf{Con}^\alpha_T(\varphi).$
%\item For cofinally many true $\varphi$:
%$T+\varphi +\mathsf{Con}_T(\varphi)\vdash \mathfrak{g}(\varphi).$
%\end{enumerate}
%\end{theorem}

For convenience, we restate Theorem \ref{main-thm} here.

\begin{theorem}
For every $\alpha\succ 0$, there is a recursive monotone $\mathfrak{g}$ such that, for all $\varphi$, $\mathfrak{g}(\varphi)$ is $\Pi_1$, and both of the following hold:
\begin{enumerate}
\item For cofinally many true $\varphi$:
$$[\varphi \wedge \mathfrak{g}(\varphi)]_T = [\varphi \wedge \mathsf{Con}^\alpha_T(\varphi)]_T.$$
\item For cofinally many true $\varphi$:
$$[\varphi \wedge\mathsf{Con}_T(\varphi)]_T= [\varphi \wedge \mathfrak{g}(\varphi)]_T.$$
\end{enumerate}
\end{theorem}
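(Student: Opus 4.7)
The plan is to build $\mathfrak{g}$ out of the pathological set $\mathfrak{A}_\alpha$ of Section \ref{sets} by setting
\[
\mathfrak{g}(\varphi) := \mathsf{Con}_T(\varphi) \wedge \forall \psi \bigl[ \bigl( \psi \in \mathfrak{A}_\alpha \wedge \mathsf{Pr}_T(\varphi \to \psi) \bigr) \to \mathsf{Con}_T^\alpha(\psi) \bigr].
\]
This is $\Pi_1$, since the antecedent is $\Sigma_1$, $\mathsf{Con}_T^\alpha(\psi)$ is $\Pi_1$, the inner implication is therefore $\Pi_1$, and the outer $\forall \psi$ preserves $\Pi_1$. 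Monotonicity is immediate: given $T\vdash \varphi\to\varphi'$, formalized transitivity of $\mathsf{Pr}_T$ turns $\mathsf{Pr}_T(\varphi'\to \psi)$ into $\mathsf{Pr}_T(\varphi\to\psi)$, while the $\mathsf{Con}_T$-conjunct transports by monotonicity of $\mathsf{Con}_T$.

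Let $\sigma_0 = \top, \sigma_1, \sigma_2, \ldots$ enumerate the true-branch sentences of the $\mathfrak{A}_\alpha$-tree; these are cofinal in the true sentences and satisfy $\sigma_{k+1} = \sigma_k \wedge \mathsf{Con}_T^{\alpha+1}(\sigma_k) \wedge \pm \varphi_k$ for the appropriate sign. I would take the $\sigma_k$'s as witnesses to condition (1) and the strengthenings $\tau_k := \sigma_k \wedge \mathsf{Con}_T^\alpha(\sigma_k)$ (also cofinal, and true since $\sigma_k$ is) as witnesses to condition (2). For (1), instantiating the universal conjunct in $\mathfrak{g}(\sigma_k)$ at $\psi = \sigma_k$ yields $T \vdash \mathfrak{g}(\sigma_k) \to \mathsf{Con}_T^\alpha(\sigma_k)$; conversely, Corollary \ref{third-mono} shows that $\mathsf{Con}_T^\alpha(\sigma_k)$ implies every other $\mathsf{Con}_T^\alpha(\psi)$ conjunct, so $T \vdash \mathfrak{g}(\sigma_k) \leftrightarrow \mathsf{Con}_T^\alpha(\sigma_k)$ outright.

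The core of the proof is (2). The direction $\mathfrak{g}(\tau_k) \to \mathsf{Con}_T(\tau_k)$ is the first conjunct. For the reverse, reason inside $T + \tau_k + \mathsf{Con}_T(\tau_k)$ and consider an arbitrary $\psi \in \mathfrak{A}_\alpha$ with $\mathsf{Pr}_T(\tau_k \to \psi)$; case-analyze the position of $\psi$ in the $\mathfrak{A}_\alpha$-tree relative to $\sigma_k$. If $\psi$ does not share a branch with $\sigma_k$, Lemma \ref{branches} gives $\mathsf{Pr}_T(\sigma_k \to \neg\psi)$, which via $\tau_k \vdash \sigma_k$ contradicts $\mathsf{Con}_T(\tau_k)$. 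If $\psi$ is an ancestor of $\sigma_k$, then $\mathsf{Pr}_T(\sigma_k \to \psi)$ and Corollary \ref{third-mono} carry $\mathsf{Con}_T^\alpha(\sigma_k)$ (which $\tau_k$ itself provides) to $\mathsf{Con}_T^\alpha(\psi)$. If $\psi$ is a proper descendant of $\sigma_k$, the construction ensures $\psi \vdash \mathsf{Con}_T^{\alpha+1}(\sigma_k)$, which by the $\beta = \alpha$ clause of the iterated-consistency definition delivers $\mathsf{Con}_T(\sigma_k \wedge \mathsf{Con}_T^\alpha(\sigma_k)) = \mathsf{Con}_T(\tau_k)$; combining $\mathsf{Pr}_T(\tau_k \to \psi)$ with the resulting $\mathsf{Pr}_T(\psi \to \mathsf{Con}_T(\tau_k))$ yields $\mathsf{Pr}_T(\tau_k \to \mathsf{Con}_T(\tau_k))$, and L\"ob's theorem formalized in $T$ then delivers $\neg \mathsf{Con}_T(\tau_k)$, a contradiction. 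In every case $\mathsf{Con}_T^\alpha(\psi)$ is established (directly or vacuously), proving $\mathfrak{g}(\tau_k)$ modulo $\tau_k \wedge \mathsf{Con}_T(\tau_k)$, hence giving (2).

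The chief obstacle is the proper-descendant case above: plain $\mathsf{Con}_T$-markers would be too weak to run the L\"ob argument, so I rely crucially on the $\mathsf{Con}_T^{\alpha+1}$-markers that the construction in Section \ref{sets} burns into every immediate descendant, together with the fact that $\mathsf{Con}_T^{\alpha+1}(\sigma_k)$ decodes to consistency of $\sigma_k \wedge \mathsf{Con}_T^\alpha(\sigma_k) = \tau_k$ precisely because $\tau_k$ was designed with this $\alpha$-th iterate inside. The rest is bookkeeping once Lemma \ref{branches}, Corollary \ref{third-mono}, and L\"ob's theorem are invoked inside $T$.
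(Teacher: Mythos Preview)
Your proof is correct and follows essentially the same route as the paper: your $\mathfrak{g}$ differs from the paper's only by the extra conjunct $\mathsf{Con}_T(\varphi)$, which Lemma~\ref{inconsistent} shows is redundant, and your case analysis for (2) is exactly the content of the paper's ``Claim'' that $\sigma_k$ is the strongest element of $\mathfrak{A}_\alpha$ that $\tau_k$ $T$-provably implies (your off-branch and proper-descendant cases being ruled out by Lemma~\ref{branches} and formalized G\"odel~II respectively, just as in the paper). The only cosmetic difference is that your added conjunct makes the direction $\mathfrak{g}(\tau_k)\to\mathsf{Con}_T(\tau_k)$ trivial where the paper invokes Lemma~\ref{inconsistent}.
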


\begin{proof}
Given $\alpha\succ 0$, let:
$$\mathfrak{g}(\varphi):=\forall\theta\in\mathfrak{A}_\alpha\big( \mathsf{Pr}_T(\varphi \to \theta) \to \mathsf{Con}_T^\alpha(\theta) \big).$$

Note that $\mathfrak{g}$ is clearly recursive. It is routine to check that, for all $\varphi$, $\mathfrak{g}(\varphi)\in\Pi_1$ and (using Corollary \ref{third-mono}) that $\mathfrak{g}$ is monotone.  

Let $\varphi\in\mathfrak{A}_\alpha$. By $\Sigma_1$-completeness of $T$:
\begin{equation}\label{sigma}
    T\vdash \varphi \in\mathfrak{A}_\alpha. \tag{$\triangle$}
\end{equation}

We reason as follows:
\begin{flalign*}
T+\mathfrak{g}(\varphi) &\vdash \forall\theta\in\mathfrak{A}_\alpha\big( \mathsf{Pr}_T(\varphi \to \theta) \to \mathsf{Con}_T^\alpha(\theta) \big) \text{ by choice of $\mathfrak{g}$};\\
T +\mathfrak{g}(\varphi) &\vdash  \mathsf{Pr}_T(\varphi \to \varphi) \to \mathsf{Con}_T^\alpha(\varphi) \text{ by ($\triangle$);}\\
T +\mathfrak{g}(\varphi) &\vdash   \mathsf{Con}_T^\alpha(\varphi) \text{ by $\Sigma_1$-completeness.}
\end{flalign*}

On the other hand, the monotonicity of $\mathsf{Con}^\alpha_T$ is provable in $T$ (see Corollary \ref{third-mono}). Whence:
\begin{flalign*}
T+\mathsf{Con}^\alpha_T(\varphi) &\vdash \forall\theta\big( \mathsf{Pr}_T(\varphi \to \theta) \to \mathsf{Con}_T^\alpha(\theta) \big);\\
T+\mathsf{Con}^\alpha_T(\varphi) &\vdash \mathfrak{g}(\varphi).
\end{flalign*}

Since cofinally many true sentences belong to $\mathfrak{A}_\alpha$, this already takes care of (1).

%So it must be that (1) holds. Let $\mathfrak{C}$ be a true cone witnessing (1). 

%By G\"{o}del's second incompleteness theorem, (2) cannot hold of any true sentence within $\mathfrak{C}$, i.e.:
%\begin{equation}\label{cone}
 %   \text{For no $\varphi\in\mathfrak{C}$ is there a $\beta\prec\alpha$ such that: $T+\varphi +\mathsf{Con}^\beta_T(\varphi)\vdash \mathfrak{g}(\varphi)$.}\tag{$\star$}
%\end{equation}

Now we pick some true $\psi\in \mathfrak{A}_\alpha$. We consider the sentence $\varphi:=\psi\wedge\mathsf{Con}^\alpha_T(\psi)$.

\begin{claim}
$T$ proves that if $\varphi$ is consistent, then $\psi$ is the strongest sentence in $\mathfrak{A}_\alpha$ that $\varphi$ $T$-provably implies.
\end{claim}

To see that the claim is true, \emph{we reason in $T$:} Suppose that $\varphi$ is consistent. Note that $\varphi$ $T$-provably implies $\psi$. Note, moreover, that $\psi$ is inconsistent with every sentence in $\mathfrak{A}_\alpha$ with which $\psi$ does not share an $\mathfrak{A}_\alpha$-branch by Lemma \ref{branches}. So, since $\varphi$ is consistent, the only $\mathfrak{A}_\alpha$ sentences that $\varphi$ $T$-provably implies must share an $\mathfrak{A}_\alpha$-branch with $\psi$. By construction of $\mathfrak{A}_\alpha$, every descendant of $\psi$ $T$-provably implies $\mathsf{Con}^{\alpha+1}_T(\psi)$. But, since $\varphi$ is consistent, $\varphi$ does not $T$-provably imply $\mathsf{Con}^{\alpha+1}_T(\psi)$ by G\"{o}del's second incompleteness theorem. This delivers the claim. 

We then reason as follows:
\begin{flalign*}
T+\varphi +\mathsf{Con}_T(\varphi)&\vdash \forall \theta\in\mathfrak{A}_\alpha\Big( \mathsf{Pr}_T(\varphi\to\theta) \to \mathsf{Pr}_T(\psi\to\theta) \Big) \text{ by the claim;}\\
T+\varphi +\mathsf{Con}_T(\varphi)&\vdash \forall \theta\in\mathfrak{A}_\alpha\Big( \mathsf{Pr}_T(\varphi\to\theta) \to \big(\mathsf{Con}^\alpha_T(\psi) \to \mathsf{Con}^\alpha_T(\theta) \big)\Big) \text{ by Corollary \ref{third-mono}};\\
T+\varphi +\mathsf{Con}_T(\varphi)&\vdash \forall \theta\in\mathfrak{A}_\alpha\Big( \mathsf{Pr}_T(\varphi\to\theta) \to \mathsf{Con}^\alpha_T(\theta) \Big) \text{ since $T+\varphi\vdash \mathsf{Con}_T^\alpha(\psi)$;}\\
T+\varphi +\mathsf{Con}_T(\varphi)&\vdash \mathfrak{g}(\varphi) \text{ by the definition of $\mathfrak{g}$.}
\end{flalign*}

For the converse:
\begin{flalign*}
T&\vdash \exists \theta \in\mathfrak{A}_\alpha   \neg\mathsf{Con}_T(\theta) \text{ by Lemma \ref{inconsistent};}\\
T+\neg \mathsf{Con}_T(\varphi)&\vdash \exists \theta \in\mathfrak{A}_\alpha \big( \mathsf{Pr}_T(\varphi\to\theta) \wedge \neg\mathsf{Con}_T^\alpha(\theta) \big) ;\\
T+\neg \mathsf{Con}_T(\varphi)&\vdash \neg\mathfrak{g}(\varphi)\text{ by the definition of $\mathfrak{g}$.}
\end{flalign*}

This takes care of (2).
\end{proof}

\section{Observations}\label{observations}

Theorem \ref{main-thm} refutes Hoped For Result \ref{hoped-for}. Yet what happens in the case $\alpha=0$? That is, why can't the proof of Theorem \ref{main-thm} be adapted to the $\alpha=0$ case, thereby contradicting Theorem \ref{MC_for_con}? The answer exhibits an important feature that the notion of consistency does not share with its iterates.

Recall that we construct $\mathfrak{A}_\alpha$ so that whenever $\varphi$ is numerated into $\mathfrak{A}_\alpha$, then $\varphi$'s immediate descendants $T$-provably imply $\mathsf{Con}^{\alpha+1}_T(\varphi)$. In particular, whenever $\varphi$ is numerated into $\mathfrak{A}_0$, then $\varphi$'s immediate descendants $T$-provably imply $\mathsf{Con}_T(\varphi)$. Let's consider the function:
$$\mathfrak{g}_0(\varphi):= \forall \theta \in \mathfrak{A}_0\big( \mathsf{Pr}_T(\varphi \to \theta) \to \mathsf{Con}_T(\theta) \big) .$$

%In an early draft of \cite{montalban2019inevitability} the author had incorrectly claimed that this function oscillated between behaving like the identity and the consistency operator. However, Shavrukov alerted the author of a mistake in this argument. 

Surprisingly, $\mathfrak{g}_0$ is actually equivalent to the consistency operator. That is:

\begin{proposition}\label{avoid}
For every $\varphi$, $T\vdash \mathsf{Con}_T(\varphi) \leftrightarrow\mathfrak{g}_0(\varphi).$
\end{proposition}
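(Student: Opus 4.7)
The plan is to prove both directions of the biconditional inside $T$, mirroring the two halves of the proof of Theorem \ref{main-thm} but observing how they collapse in the $\alpha = 0$ case.

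For the forward direction $\mathsf{Con}_T(\varphi) \to \mathfrak{g}_0(\varphi)$, I would simply invoke provable monotonicity of $\mathsf{Con}_T$. Reasoning in $T$: assume $\mathsf{Con}_T(\varphi)$ and let $\theta \in \mathfrak{A}_0$ satisfy $\mathsf{Pr}_T(\varphi \to \theta)$. The $\alpha = 1$ instance of Corollary \ref{third-mono} immediately gives $\mathsf{Con}_T(\theta)$, and universally generalizing over $\theta$ yields $\mathfrak{g}_0(\varphi)$. (Note that this direction does not use anything special about $\mathfrak{A}_0$ at all; it works with any set in place of $\mathfrak{A}_0$.)

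For the converse $\mathfrak{g}_0(\varphi) \to \mathsf{Con}_T(\varphi)$, I would argue by contraposition in $T$. Suppose $\neg\mathsf{Con}_T(\varphi)$, so that $T$ proves $\neg\varphi$ and therefore $\mathsf{Pr}_T(\varphi \to \theta)$ holds for every $\theta$. Apply Lemma \ref{inconsistent} with $\alpha = 0$: there is some $\theta \in \mathfrak{A}_0$ which $T$ refutes, and a $T$-refutable sentence is exactly a sentence $\theta$ with $\neg\mathsf{Con}_T(\theta)$. This $\theta$ witnesses $\neg\mathfrak{g}_0(\varphi)$, completing the proof.

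The computation is short, so the ``main obstacle'' is really conceptual rather than technical, and it clarifies why the construction of Theorem \ref{main-thm} cannot be adapted to refute Theorem \ref{MC_for_con}. In the proof of Theorem \ref{main-thm}, the analogous backward step produced a $\theta \in \mathfrak{A}_\alpha$ with $\neg \mathsf{Con}_T^\alpha(\theta)$, and for $\alpha \succ 0$ this is a nontrivial $\Sigma_1$-strengthening of mere $T$-refutability. When $\alpha = 0$, however, $\mathsf{Con}_T^0(\theta) = \top$ is never violated, so one is forced to replace the consequent by the next nontrivial thing, namely $\mathsf{Con}_T(\theta)$; but then Lemma \ref{inconsistent}, far from producing a gap between $\mathfrak{g}_0$ and $\mathsf{Con}_T$, directly supplies a witness for $\neg\mathsf{Con}_T(\varphi)$ whenever $\mathfrak{g}_0(\varphi)$ fails. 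The pathological trees $\mathfrak{A}_\alpha$ thus lose their teeth exactly at the level that would contradict the classification of one-step monotone operators.
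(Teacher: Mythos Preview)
Your proof is correct and follows essentially the same route as the paper: the forward direction uses provable monotonicity of $\mathsf{Con}_T$ (the paper simply writes out the implication rather than citing Corollary \ref{third-mono}), and the backward direction argues by contraposition using Lemma \ref{inconsistent} to produce a $T$-refutable $\theta \in \mathfrak{A}_0$ exactly as you do. Your closing conceptual paragraph is commentary rather than proof, and while the paper frames the obstruction slightly differently (emphasizing that all $T$-inconsistent sentences $T$-provably imply one another), the substance agrees.
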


\begin{proof}
Left to right:
\begin{flalign*}
T \vdash \mathsf{Con}_T(\varphi) &\to \forall \theta\big( \mathsf{Pr}_T(\varphi \to \theta) \to \mathsf{Con}_T(\theta)\big);\\
& \to \mathfrak{g}_0(\varphi).
\end{flalign*}

Right to left:
\begin{flalign*}
T \vdash \neg \mathsf{Con}_T(\varphi) & \to \exists \theta \in\mathfrak{A}_0 \big( \mathsf{Pr}_T(\varphi\to\theta) \wedge \neg\mathsf{Con}_T(\theta) \big) \text{ by Lemma \ref{inconsistent}};\\
&\to \neg \mathfrak{g}_0(\varphi).
\end{flalign*}
This completes the proof.
\end{proof}
Note the appeal to Lemma \ref{inconsistent}. Here we use that $\mathfrak{A}_0$ is guaranteed to contain an inconsistent sentence; the important point is that if $\varphi$ is $T$-inconsistent, then some sentence that $\varphi$ $T$-provably implies belongs to  $\mathfrak{A}_0$. Indeed, the fact that all $T$-inconsistent sentences $T$-provably imply each other is used in the proof of the positive Theorem \ref{MC_for_con} (see the proof of Theorem 2.4 Case 2 in \cite{walsh2020note}). By contrast, if we merely knew $\neg\mathsf{Con}_T^{\alpha+1}(\varphi)$, we would not be able to conclude that some $T$-consequence of $\varphi$ belongs to  $\mathfrak{A}_\alpha$. Nor for any of the other iterates of the consistency operator.

There are ways of modifying $\mathfrak{g}_0$ to avoid Proposition \ref{avoid}. Rather than quantifying over the $T$-implications of $\varphi$ in $\mathfrak{A}_0$ and saying that they are \emph{all} consistent, we can let $\mathfrak{g}(\varphi)$ merely assert the conjunction according to which \emph{each} is consistent. That is: 

\begin{equation*}
\mathfrak{g}_0^\star(\varphi) :=  \left\{
        \begin{array}{ll}
            \bigwedge\{ \mathsf{Con}_T(\zeta) \mid \zeta\in \mathfrak{A}_0 \text{ and }T+\varphi\vdash \zeta \} & \quad \text{if $[\varphi]_T\neq[\bot]_T$}, \\
            \bot & \quad \text{otherwise}
        \end{array}
    \right.
\end{equation*}

In previous work we have shown that $\mathfrak{g}_0^\star$ oscillates between behaving like the identity and the consistency operator (see Theorem 3.7 in \cite{walsh2020note}). However, computing the function $\mathfrak{g}_0^\star$ requires access to the oracle $0'$. Indeed, to calculate $\mathfrak{g}_0^\star(\varphi)$ we must know whether $[\varphi]_T\neq[\bot]_T$, which requires $0'$. So $\mathfrak{g}_0^\star$ demonstrates that recursiveness is also a necessary condition in Theorem \ref{MC_for_con}; it cannot be weakened to limit-recursiveness.

%%%%%%%%%%%%%%%%%%%%%%%%%%%%%%%%%%%%%%%%%%%%%%%%
%%%%%%%%%%%%%%%%%%%%%%%%%%%%%%%%%%%%%%%%%%%%%%%%
%%%%%%%%%%%%%%%%%%%%%%%%%%%%%%%%%%%%%%%%%%%%%%%%

%%%%%%%%%%%%%%%%%%%%%%%%%%%%%%%%%%%%%%%%%%%%%%%%
%%%%%%%%%%%%%%%%%%%%%%%%%%%%%%%%%%%%%%%%%%%%%%%%
%%%%%%%%%%%%%%%%%%%%%%%%%%%%%%%%%%%%%%%%%%%%%%%%

\bibliographystyle{plain}
\bibliography{bibliography}

\end{document}